\definecolor{citegreen}{rgb}{0,0.6,0}
\definecolor{refred}{rgb}{0.8,0,0}
\title{A compactness theorem for complete Ricci shrinkers}
\author{Robert Haslhofer, Reto M\"{u}ller}
\date{}
\providecommand{\abs}[1]{\lvert #1\rvert}%
\providecommand{\Abs}[1]{\left\lvert #1\right\rvert}%
\providecommand{\norm}[1]{\lVert #1\rVert}%
\providecommand{\scal}[1]{\langle #1\rangle}%
\DeclareMathOperator{\divop}{div}%
\DeclareMathOperator{\Hess}{Hess}%
\DeclareMathOperator{\Vol}{Vol}%
\DeclareMathOperator{\Rm}{Rm}%
\DeclareMathOperator{\Rc}{Rc}%
\DeclareMathOperator{\ff}{I\hspace{-0.02cm}I}%
\newcommand{\RR}{\mathbb{R}}%
\newcommand{\ZZ}{\mathbb{Z}}%
\newcommand{\NN}{\mathbb{N}}%
\newcommand{\eps}{\varepsilon}%
\newcommand{\Lap}{\triangle}%
\newcommand{\D}{\nabla}%
\newcommand{\sW}{\mathcal{W}}%
\newcommand{\ol}{\overline}%
\newcommand{\ul}{\underline}%
\newcommand{\dt}{\tfrac{\partial}{\partial t}}%
\newcommand{\ds}{\tfrac{\partial}{\partial s}}%
\newtheoremstyle{break}%
  {12pt}%
  {16pt}%
  {\itshape}%
  {}%
  {\bfseries}%
  {}%
  {\newline}%
  {\thmname{#1}\thmnumber{ #2}\thmnote{ \normalfont{(#3)}}}%
\theoremstyle{definition}%
\theoremstyle{remark}%
\newtheorem*{rem}{Remark}%
\theoremstyle{break}%
\newtheorem{lemma}{Lemma}[section]%
\newtheorem{prop}[lemma]{Proposition}%
\newtheorem{thm}[lemma]{Theorem}%
\newtheorem{cor}[lemma]{Corollary}%
\newtheorem{defn}[lemma]{Definition}%
\numberwithin{equation}{section}%
\begin{document}%
\maketitle%
\pagenumbering{arabic}%
\begin{abstract}
We prove precompactness in an orbifold Cheeger-Gromov sense of
complete gradient Ricci shrinkers with a lower bound on their
entropy and a local integral Riemann bound. We do not need any pointwise curvature assumptions, volume or diameter bounds. In dimension four, under a technical assumption, we can replace the local integral Riemann bound by an upper bound for the Euler characteristic. The proof relies on a Gauss-Bonnet with cutoff argument.
\end{abstract}
\section{Introduction}

Let us start with some background: The classical Cheeger-Gromov theorem says that every sequence of
closed Riemannian manifolds with uniformly bounded curvatures,
volume bounded below, and diameter bounded above has a
$C^{1,\alpha}$-convergent subsequence \cite{Che70, Gro99,  GW88}. The convergence is in the sense of Cheeger-Gromov, meaning $C^{1,\alpha}$-convergence of the Riemannian metrics after pulling back by suitable diffeomorphisms. Without diameter bounds, the global volume bound should be replaced
by a local volume non-collapsing assumption \cite{CGT82}, and the
appropriate notion of convergence is convergence in the pointed
Cheeger-Gromov sense. If one can also control all the derivatives of
the curvatures, e.g. in the presence of an elliptic or parabolic
equation, the convergence is smooth \cite{Ham95}. To remind the
reader about the precise definition, a sequence of complete smooth
Riemannian manifolds with basepoints $(M^n_i,g_i,p_i)$ converges to
$(M^n_{\infty},g_{\infty},p_{\infty})$ in the \emph{pointed smooth
Cheeger-Gromov sense} if there exist an exhaustion of $M_\infty$ by
open sets $U_i$ containing $p_\infty$ and smooth embeddings
$\phi_i:U_i\to M_i$ with $\phi_i(p_\infty)=p_i$ such that the pulled
back metrics $\phi_i^\ast g_i$ converge to $g_{\infty}$ in
$C^{\infty}_{\mathrm{loc}}$.\\

Now, let us describe the problem under consideration: Hamilton's Ricci flow in higher dimensions without curvature assumptions leads to the formation of intriguingly complex singularities \cite{Ham82,Ham95s}. The specific question we are concerned with is about the compactness properties of the corresponding space of singularity models. Namely, given a sequence of \emph{gradient shrinkers}, i.e.\ a sequence of
smooth, connected, complete Riemannian manifolds $(M^n_i,g_i)$
satisfying
\begin{equation}\label{1.solitoneq}
\Rc_{g_i} + \Hess_{g_i} f_i = \tfrac{1}{2}g_i
\end{equation}
for some smooth function $f_i:M\to\RR$ (called the potential), under what assumptions can we find a convergent subsequence? In the \emph{compact case}, this problem was first studied by Cao-Sesum \cite{CS07}, see also Zhang \cite{Zha06}, and Weber succeeded in removing their pointwise Ricci bounds \cite{Web08}. We have profited from these previous works and the papers by Anderson, Bando, Kasue, Nakajima and Tian about the
Einstein case \cite{And89, Nak88, BKN89, Tia90}, as well as from the papers
\cite{AndChe91,TV05a,TV05b,TV08, Uhl82}.\\

In this article, we generalize the shrinker orbifold compactness
result to the case of \emph{noncompact manifolds}. The obvious motivation
for doing this is the fact that most interesting singularity models
are noncompact, the cylinder being the most basic example. We manage to remove all volume and diameter assumptions, and we do
not need any positivity assumptions for the curvatures nor pointwise
curvature bounds (as the blow-down shrinker shows \cite{FIK03}, even the
Ricci curvature can have both signs). In fact, if the curvature is
uniformly bounded below, it is easy to pass to a smooth limit (Theorem \ref{2.smoothlimit}). The general case without positivity assumptions is much harder.\\

Having removed all other assumptions, we prove a precompactness theorem for complete Ricci shrinkers, assuming only a lower bound for the Perelman entropy and local $L^{n/2}$ bounds for the Riemann tensor (Theorem \ref{1.mainthm1}). The assumptions allow orbifold singularities to occur (these are isolated singularities modelled on $\RR^n/\Gamma$ for some finite subgroup $\Gamma\subset \mathrm{O}(n)$), and the convergence is in the \emph{pointed
orbifold Cheeger-Gromov sense}. In particular, this means that the
sequence converges in the pointed Gromov-Hausdorff sense (this is the natural notion of convergence for complete metric spaces), and that the convergence is in the smooth Cheeger-Gromov sense away from the
isolated point singularities (see Section \ref{SectionCG} for the
precise definitions).\\

Our results are most striking in dimension four. In this case, the local $L^2$ Riemann bound
is \emph{not} an a priori assumption, but we prove it modulo a technical assumption on the soliton potential (Theorem \ref{1.mainthm2}). Our proof is based on a 4d-Chern-Gauss-Bonnet with cutoff argument
(see Section \ref{SectionGB}). In particular, the key estimate of the cubic boundary term (Lemma \ref{4.RcRmlemma}) is based on a delicate use of partial integrations and soliton identities.\\

Before stating our main results, let us explain a few facts about gradient shrinkers, see Section
\ref{SectionGH} and Appendix \ref{applemmas} for proofs and further references. Associated to every
gradient shrinker $(M^n,g,f)$, there is a family of Riemannian
metrics $g(t)$, $t\in(-\infty,1)$, evolving by Hamilton's Ricci flow $\dt g(t)
=-2\Rc_{g(t)}$ with $g(0)=g$, which is self-similarly shrinking, i.e.\
$g(t)=(1-t)\phi_t^*g$ for the family of diffeomorphisms $\phi_t$
generated by $\frac{1}{1-t}\D f$, see \cite{CK,Zha09}. In this article
however, we focus on the elliptic point of view. Gradient shrinkers
always come with a natural basepoint, a point $p\in M$ where the
potential $f$ attains its minimum (such a minimum always exists
and the distance between two minimum points is bounded by a constant
depending only on the dimension). The potential grows like
one-quarter distance squared, so $2\sqrt{f}$ can be thought of as
distance from the basepoint. Moreover, the volume growth is at most
Euclidean, hence it is always possible to normalize $f$ (by adding a
constant) such that
\begin{equation}\label{1.normalizationf}
\int_M (4\pi)^{-n/2}e^{-f}dV_g = 1.
\end{equation}
Then the gradient shrinker has a well defined \emph{entropy},
\begin{equation}\label{1.defmu}
\mu(g)= \sW(g,f) = \int_M\big(\abs{\D f}_g^2 + R_g + f -
n\big)(4\pi)^{-n/2}e^{-f}dV_g>-\infty.
\end{equation}
The entropy was introduced by Perelman in his famous paper \cite{Per02} to solve the long standing problem of ruling out collapsing with bounded curvature (see \cite{BBBMP,CZ,KL08,MT} for detailed expositions of Perelman's work). For general Ricci flows, the entropy is time-dependent, but on
gradient shrinkers it is constant and finite (even without curvature
assumptions). Assuming a lower bound for the entropy is natural,
because it is non-decreasing along the Ricci flow in the compact case or
under some mild technical assumptions. Under a local scalar
curvature bound, a lower bound on the entropy gives a local
volume non-collapsing bound.\\

The main results of this article are the following two theorems.

\begin{thm}\label{1.mainthm1}
Let $(M^n_i,g_i,f_i)$ be a sequence of gradient shrinkers (with
normalization and basepoint $p_i$ as above) with entropy uniformly
bounded below, \ $\mu(g_i)\geq\ul{\mu}>-\infty$, and uniform
local energy bounds,
\begin{equation}\label{1.Rmbound}
\int_{B_{r}(p_i)}\abs{\Rm_{g_i}}_{g_i}^{n/2}dV_{g_i} \leq
E(r)<\infty, \quad \forall i,r.
\end{equation}
Then a subsequence of $(M^n_i,g_i,f_i,p_i)$ converges to an orbifold
gradient shrinker in the pointed orbifold Cheeger-Gromov sense.
\end{thm}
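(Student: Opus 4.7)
\textbf{Proof plan for Theorem \ref{1.mainthm1}.}

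The plan is to combine $\varepsilon$-regularity for shrinker curvature with a standard point-selection argument to reduce to smooth Cheeger-Gromov compactness away from finitely many concentration points, at which removable-singularity theory produces the orbifold structure. Throughout, I exploit the shrinker identities, which on the normalized soliton give $R+|\D f|^2-f=\mu+n$ and $R+\Lap f=\tfrac{n}{2}$, together with $|\D f|^2\leq f$, so that $2\sqrt{f}$ is essentially the distance to the basepoint $p_i$ and $f_i(p_i)$ is bounded in terms of $\mu(g_i)$ alone. In particular, the $f_i$ are equi-Lipschitz on compact sets and have uniformly controlled minima.

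First, I would establish the basic preliminary estimates: a local scalar curvature bound from the $L^{n/2}$ Riemann bound (via the integrated shrinker identity or directly), together with Perelman's non-collapsing argument using $\mu(g_i)\geq\ul{\mu}$, to obtain a uniform local non-collapsing constant $\kappa(r)>0$ on each ball $B_r(p_i)$. Next, I would prove an $\eps$-regularity lemma for shrinkers: there exist $\eps_0,C>0$ depending on $n$ such that whenever $\int_{B_{2r}(x)}|\Rm|^{n/2}dV<\eps_0$ on a (rescaled) shrinker, one has $\sup_{B_r(x)}|\Rm|\leq C r^{-2}$ and, by bootstrapping, bounds on all derivatives $\D^k\Rm$. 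The proof is Moser iteration applied to the weighted elliptic Bochner-type inequality $\Lap_f |\Rm|\geq -c|\Rm|^2-|\Rm|$ (where $\Lap_f=\Lap-\D_{\D f}$) coming from $\dt \Rm=\Lap \Rm+\Rm\ast\Rm$ along the induced Ricci flow combined with self-similarity; the drift term is harmless on compact subsets thanks to the bound on $|\D f|$.

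With $\eps$-regularity in hand, I define the set of bad points in $B_r(p_i)$ as the points $x$ where no radius $s\leq s_0(r)$ satisfies $\int_{B_s(x)}|\Rm|^{n/2}<\eps_0$. By the uniform energy bound \eqref{1.Rmbound}, a simple covering argument shows that the number of bad points in $B_r(p_i)$ is bounded by $E(r)/\eps_0$, so after a diagonal subsequence I may assume the bad points converge to a discrete set $\mathcal{S}\subset M_\infty^{\rm reg}$ in a limiting Gromov-Hausdorff sense. Away from the bad points, $\eps$-regularity yields uniform $C^k$-bounds on $\Rm_{g_i}$, and non-collapsing yields a uniform injectivity radius lower bound, so Hamilton's smooth Cheeger-Gromov compactness theorem gives a smooth limit $(M_\infty\setminus\mathcal{S},g_\infty,p_\infty)$ after passing to a further subsequence. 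Equicontinuity and the controlled growth of $f_i$ allow us to pass $f_i$ to a smooth limit $f_\infty$ on this regular part, and to take the limit of \eqref{1.solitoneq} and \eqref{1.normalizationf}.

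The main obstacle, and the last step, is to show that each point of $\mathcal{S}$ is an isolated orbifold singularity of $\RR^n/\Gamma$ type with $\Gamma\subset\mathrm{O}(n)$ finite, and that the convergence is in the pointed orbifold Cheeger-Gromov sense. For this I would, at each $q\in\mathcal{S}$, perform a curvature rescaling: pick $x_i\to q$ with $\lambda_i:=|\Rm_{g_i}|(x_i)^{1/2}\to\infty$ and consider $(M_i,\lambda_i^2 g_i,x_i)$. The shrinker equation rescales to $\Rc+\lambda_i^{-2}\Hess f_i=\lambda_i^{-2}g_i/2$, so any smooth limit is Ricci-flat, complete, non-collapsed, and has $\int|\Rm|^{n/2}<\infty$. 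The theory of Anderson-Bando-Kasue-Nakajima-Tian then identifies the bubble as a Ricci-flat ALE space with an $\RR^n/\Gamma$ cone at infinity. Finally, applying Uhlenbeck's removable singularities theorem (in the form used by \cite{BKN89,Tia90}) to the tangent cone shows that the singular point in $M_\infty$ has an orbifold chart on which $g_\infty$ extends as a $C^{1,\alpha}$ orbifold metric and $f_\infty$ extends smoothly as an orbifold function satisfying \eqref{1.solitoneq}. The hardest part is controlling the behavior of the potentials $f_i$ uniformly down to the singular set and verifying the orbifold chart structure; this is done by using the uniform quadratic growth of $f_i$ to obtain $C^0$-convergence of $f_i$ everywhere, and then upgrading to smooth orbifold convergence via the bubble analysis.
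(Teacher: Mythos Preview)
Your outline matches the paper's proof almost step for step: non-collapsing from the entropy bound, $\eps$-regularity via Moser iteration on the elliptic equation $\Lap\Rm=\D f*\D\Rm+\Rm+\Rm*\Rm$, a covering argument to isolate finitely many bad balls per $B_r(p_i)$, smooth Cheeger--Gromov convergence on the complement, and then the Anderson--Bando--Kasue--Nakajima--Tian--Uhlenbeck machinery to upgrade each added point to a smooth $\RR^n/\Gamma$ orbifold chart.

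One point you gloss over that the paper treats carefully: your $\eps$-regularity constants cannot depend on $n$ alone. Moser iteration needs a \emph{uniform} local Sobolev constant on $B_\delta(x)\subset B_r(p_i)$, and this is not automatic for a sequence of noncompact manifolds. The paper devotes a separate lemma (Lemma~\ref{3.sobconst}) to this, obtaining $C_S(r,n,\ul{\mu})$ by combining the non-collapsing with the Bakry--Emery volume comparison (via Croke's visibility-angle bound on the isoperimetric constant). You have all the ingredients --- you do establish non-collapsing --- but you should make this link explicit, and your $\eps_0$, $C$ will then depend on $r$ and $\ul{\mu}$ as well. A second, smaller omission: the paper includes an explicit connectedness step (ruling out the \emph{multifold} scenario where several cones meet at $q$) using a short-curve argument \`a la Anderson--Cheeger/Abresch--Gromoll; your bubble analysis implicitly assumes a single component near each singular point, so you should either justify this or fold it into the tangent-cone step.
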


Here is a cute way to rephrase this theorem: \emph{The space of Ricci flow singularity models with bounded entropy and locally bounded energy is orbifold compact.}\\

In the case $n=4$, we obtain a particularly strong compactness result under a technical assumption on the potential.

\begin{thm}\label{1.mainthm2}
Let $(M^4_i,g_i,f_i)$ be a sequence of four-dimensional gradient
shrinkers (with normalization and basepoint $p_i$ as above) with
entropy uniformly bounded below, $\mu(g_i)\geq\ul{\mu}>-\infty$,
Euler characteristic bounded above, $\chi(M_i)\leq\ol{\chi}<\infty$,
and the technical assumption that the potentials do not have critical
points at large distances, more precisely
\begin{equation}\label{1.noncritf}
\abs{\D f_i}(x)\geq c>0 \qquad\text{if } d(x,p_i)\geq r_0,
\end{equation}
for some constant $r_0<\infty$. Then we have the weighted $L^2$ estimate
\begin{equation}
\int_{M_i} \abs{\Rm_{g_i}}_{g_i}^{2}e^{-f_i}dV_{g_i}\leq C(\ul{\mu},\ol{\chi},c,r_0)<\infty.
\end{equation}
In particular, the energy condition (\ref{1.Rmbound}) is satisfied
and by Theorem \ref{1.mainthm1} a subsequence converges in the
pointed orbifold Cheeger-Gromov sense.
\end{thm}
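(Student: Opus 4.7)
The plan is to apply the four-dimensional Chern--Gauss--Bonnet formula on an exhaustion of $M$ by sublevel sets of $f$, integrate the resulting identity against the weight $e^{-s}$ in the level parameter $s$, and reduce the outcome via soliton identities to a single cubic boundary term that is controlled by Lemma~\ref{4.RcRmlemma}. Fix one shrinker $(M,g,f,p)$ and drop the index $i$. Since $\abs{\D f}\geq c$ outside $B_{r_0}(p)$, the flow of $\D f/\abs{\D f}^2$ shows that for every regular value $s>s_0:=\sup_{B_{r_0}(p)} f$ the sublevel set $\Omega_s=\{f\leq s\}$ is a compact smooth $4$-manifold with smooth boundary $\Sigma_s=\{f=s\}$, and $M$ deformation retracts onto $\Omega_{s_0}$, so $\chi(\Omega_s)=\chi(M)\leq\ol\chi$. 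Chern--Gauss--Bonnet with boundary then reads
\begin{equation*}
32\pi^2 \chi(M) = \int_{\Omega_s}\bigl(\abs{\Rm}^2-4\abs{\Rc}^2+R^2\bigr)\,dV + \int_{\Sigma_s}\mathcal{T}\,dA,
\end{equation*}
where $\mathcal{T}$ is a cubic transgression polynomial in $\Rm$ and in the second fundamental form $\ff$ of $\Sigma_s$. Multiplying by $e^{-s}$ and integrating $s\in[s_0,\infty)$, Fubini turns the bulk into $\int_{\{f\geq s_0\}}(\abs{\Rm}^2-4\abs{\Rc}^2+R^2)e^{-f}dV$ while the coarea formula converts the boundary into $\int_{\{f\geq s_0\}}\mathcal{T}\abs{\D f}e^{-f}dV$, and the left-hand side becomes $32\pi^2\chi(M)e^{-s_0}$.

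The two undesirable bulk curvature terms are handled by the standard shrinker identities which should be recorded in Appendix~\ref{applemmas}. Tracing the soliton equation gives $R+\Delta f=2$, and combining the entropy lower bound with the normalization~(\ref{1.normalizationf}) and Hamilton's identity $\abs{\D f}^2+R-f=\mathrm{const}$ bounds $\int R\,e^{-f}dV$ by a constant depending only on $\ul\mu$. The contracted Bianchi-plus-soliton identity $\D_i R = 2\Rc_{ij}\D_j f$ then yields, after integration by parts against the weight $e^{-f}$, the clean identity $\int \abs{\Rc}^2 e^{-f} dV = \tfrac12 \int R\,e^{-f}dV$, and an analogous manipulation controls $\int R^2 e^{-f} dV$. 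Together with the trivial local bound $\int_{\Omega_{s_0}}\abs{\Rm}^2 dV<\infty$ on the fixed compact set, this reduces the weighted Gauss--Bonnet identity to
\begin{equation*}
\int_M \abs{\Rm}^2 e^{-f}dV \leq C(\ul\mu,\ol\chi) + \Bigl|\int_{\{f\geq s_0\}} \mathcal{T}\abs{\D f}e^{-f}dV\Bigr|.
\end{equation*}

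The main obstacle is the transgression integral on the right. On level sets of $f$, the shape operator of $\Sigma_s$ equals $\ff=(\tfrac12 g-\Rc)/\abs{\D f}$, so $\abs{\D f}\mathcal{T}$ expands schematically into $\abs{\Rm}(1+\abs{\Rc})$ together with $(1+\abs{\Rc})^3/\abs{\D f}^2$. The factor $1/\abs{\D f}^2$ is harmless on $\{f\geq s_0\}$ thanks to~(\ref{1.noncritf}), and the linear-$\Rm$ piece is absorbed into the left-hand side by Cauchy--Schwarz combined with the $\abs{\Rc}^2$ identity just obtained. The genuinely delicate term is the cubic curvature integral $\int \abs{\Rc}^2\abs{\Rm}\,e^{-f}dV$ (and its siblings involving $\abs{\Rc}\abs{\Rm}^2$): this is exactly what Lemma~\ref{4.RcRmlemma} is designed to handle, via successive integrations by parts that use $\Hess f=\tfrac12 g-\Rc$, $\D R=2\Rc(\D f,\cdot)$ and the second Bianchi identity to trade derivatives of $\Rm$ for lower-order curvature expressions bounded by $\eps\int\abs{\Rm}^2 e^{-f}dV+C_\eps(\ul\mu,\ol\chi,c,r_0)$. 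Choosing $\eps$ small, absorbing the remaining $\abs{\Rm}^2$ term into the left-hand side, and finally invoking Theorem~\ref{1.mainthm1} completes the proof. The hard part is making this integration-by-parts scheme close up cleanly with the correct signs and absorbable error terms, which is precisely the content of Lemma~\ref{4.RcRmlemma}.
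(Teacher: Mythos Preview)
Your overall strategy matches the paper's exactly: Chern--Gauss--Bonnet on sublevel sets of $f$, integrate in the level parameter to produce the weight $e^{-f}$, control the bulk $\abs{\Rc}^2$ and $R^2$ terms via Lemma~\ref{4.sesumlemma}, and feed the cubic boundary term into Lemma~\ref{4.RcRmlemma} for absorption. Two points need correction, however, one of which is a real gap as written.

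First, your Fubini step is miscomputed and this creates a circularity. The identity
\[
\int_{s_0}^\infty e^{-s}\int_{\Omega_s}h\,dV\,ds
=\int_M h\,e^{-\max(f,s_0)}\,dV
=\int_M h\,\vartheta\,dV,
\qquad \vartheta:=\min\{e^{-s_0},e^{-f}\},
\]
holds for \emph{all} of $M$, not just $\{f\geq s_0\}$; this is exactly the paper's cutoff $\vartheta$ (with $u_0=e^{-s_0}$). Consequently the quantity $\int_M\abs{\Rm}^2\vartheta\,dV$ itself appears on the left-hand side and there is nothing to patch. Your ``trivial local bound $\int_{\Omega_{s_0}}\abs{\Rm}^2dV<\infty$'' is finite for each fixed shrinker, but it is \emph{not} uniform along the sequence---indeed, on $\Omega_{s_0}$ the weight $e^{-f}$ is bounded above and below, so a uniform bound on $\int_{\Omega_{s_0}}\abs{\Rm}^2dV$ is equivalent to the conclusion you are trying to prove. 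Once Fubini is done correctly this issue evaporates, and at the end one recovers $\int_M\abs{\Rm}^2e^{-f}dV$ from $\int_M\abs{\Rm}^2\vartheta\,dV$ because $e^{-f}\leq e^{s_0-\ul\mu}\,\vartheta$ (using $f\geq\ul\mu$ from Lemma~\ref{2.growthlemma} and $C_1=-\mu$).

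Second, you have misidentified the delicate boundary term. The Chern--Gauss--Bonnet transgression in (\ref{4.GaussBonnet}) is built only from $k_1k_2k_3$ and $k_iK_{jk}$, i.e.\ schematically $\ff^3$ and $\ff\cdot\Rm$. With $\ff=\D_\perp^2 f/\abs{\D f}$ this produces, after the coarea factor $\abs{\D f}$, only the pieces $(1+\abs{\Rc})\abs{\Rm}$ and $(1+\abs{\Rc})^3/\abs{\D f}^2$; there are no $\abs{\Rc}^2\abs{\Rm}$ or $\abs{\Rc}\abs{\Rm}^2$ terms. The linear-in-$\Rm$ piece is absorbed by Young's inequality plus Lemma~\ref{4.sesumlemma} exactly as you say, and the genuinely hard remainder is the \emph{pure Ricci cubic} $\int_M\abs{\Rc}^3e^{-f}dV$ (cf.\ (\ref{4.cubicest})), which is precisely the left-hand side of Lemma~\ref{4.RcRmlemma}.
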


As explained above, to appreciate our theorems it is most important
to think about the assumptions that we do \emph{not} make.

\begin{rem}
The technical assumption (\ref{1.noncritf}) is satisfied in particular
if the scalar curvature satisfies
\begin{equation}\label{1.strictlyless}
R_{g_i}(x)\leq \alpha d(x,p_i)^2+C
\end{equation}
for some $\alpha<\tfrac{1}{4}$. The scalar curvature grows at most
like one-quarter distance squared and the average scalar curvature on $2\sqrt{f}$-balls is bounded by $n/2$ (see Section \ref{SectionGH} and Appendix \ref{applemmas}), so
the technical assumption is rather mild. However, it would be very desirable
to remove (or prove) it. Of course, (\ref{1.noncritf}) would also
follow from a diameter bound.
\end{rem}

In this article, the potentials of the gradient shrinkers play a
central role in many proofs. In particular, we can view (a
perturbation of) $f$ as a Morse function, use $e^{-f}$ as weight or
cutoff function and use balls defined by the distance $2\sqrt{f}$
instead of the Riemannian distance. This has the great advantage,
that we have a formula for the second fundamental form in the
Gauss-Bonnet with boundary argument.\\

There are very deep and interesting other methods that yield
comparable results, in particular the techniques developed by
Cheeger-Colding-Tian in their work on the structure of spaces with
Ricci curvature bounded below (see \cite{Che03} for a nice survey)
and the nested blowup and contradiction arguments of Chen-Wang
\cite{CW09}. Finally, let us mention the very interesting recent
paper by Song-Weinkove \cite{SW10}.\\

This article is organized as follows. In Section \ref{SectionGH},
we collect and prove some properties of gradient shrinkers. In
Section \ref{SectionCG}, we prove Theorem \ref{1.mainthm1}. Finally, we prove
Theorem \ref{1.mainthm2} in Section \ref{SectionGB} using the
Chern-Gauss-Bonnet theorem for manifolds with boundary and carefully
estimating the boundary terms. We would like to point out that the
Sections \ref{SectionCG} and \ref{SectionGB} are completely
independent of each other and can be read in any
order.

\paragraph{Acknowledgments:} We greatly thank Tom Ilmanen for suggesting this problem. We also thank him and Carlo Mantegazza for very
interesting discussions, and the anonymous referee for useful suggestions that greatly helped to improve the exposition. The first author was partially supported by
the Swiss National Science Foundation, the research of the second
author was supported by the Italian FIRB Ideas ``Analysis
and Beyond" and by The Leverhulme Trust.

\section{Some properties of gradient shrinkers}\label{SectionGH}

Let us start by collecting some basic facts about gradient
shrinkers (for a recent survey about Ricci solitons, see \cite{Cao09}). Tracing the soliton equation,
\begin{equation}
R_{ij}+\D_i\D_j f =\tfrac{1}{2}g_{ij},\label{2.soleqn}
\end{equation}
gives
\begin{equation}
R+\Lap f =\tfrac{n}{2}.\label{2.tracedsoleqn}
\end{equation}
Using the contracted second Bianchi identity, inserting the soliton equation (\ref{2.soleqn}), and commuting the derivatives, we compute
\begin{equation}
\tfrac{1}{2}\D_iR=\D_iR-\D_jR_{ij}
=-\D_i\D_j\D_jf+\D_j\D_i\D_jf=R_{ik}\D_kf.\label{2.bianchi}
\end{equation}
As observed by Hamilton, from this formula and equation (\ref{2.soleqn}), it follows that
\begin{equation}
C_1(g):= R+\abs{\D f}^2-f\label{2.auxiliaryeqn}
\end{equation}
is constant (note that we always assume that our manifold is connected). By (\ref{2.soleqn}), the Hessian of $f$ is uniquely
determined by $g$. Thus, the potential has the form
$f(x,y)=\tilde{f}(x)+\tfrac{1}{4}\abs{y-y_0}^2$ after splitting
$M\cong \tilde{M}\times \RR^k$ isometrically. Note that the constant $C_1(g)$ and also the normalization (\ref{1.normalizationf}) do not depend on the point $y_0\in\RR^k$. It follows that $\tilde{f}$ is completely determined by fixing the normalization (\ref{1.normalizationf}), and that $C_1(g)$ is independent of $f$ after fixing this normalization. Gradient shrinkers always have nonnegative scalar
curvature,
\begin{equation}
R\geq 0.\label{2.scalarpos}
\end{equation}
This follows from the elliptic equation
\begin{equation}
R+\scal{\D f,\D R}=\Lap R+2\abs{\Rc}^2\label{2.ellipteqR}
\end{equation}
by the maximum principle, see \cite{Zha09} for a proof in the
noncompact case without curvature assumptions. Equation
(\ref{2.ellipteqR}) is the shrinker version of the evolution
equation $\dt R=\Lap R +2\abs{\Rc}^2$ under Ricci flow.\\

The following two lemmas show, that the shrinker potential $f$ grows
like one-quarter distance squared and that gradient shrinkers have
at most Euclidean volume growth.

\begin{lemma}[Growth of the potential]\label{2.growthlemma}
Let $(M^n,g,f)$ be a gradient shrinker with $C_1=C_1(g)$ as in
(\ref{2.auxiliaryeqn}).  Then there exists a point $p\in M$ where
$f$ attains its infimum and $f$ satisfies the quadratic growth
estimate
\begin{equation}\label{2.quadgrowth}
\tfrac{1}{4}\big(d(x,p)-5n\big)_{\!+}^{2} \leq f(x)+C_1 \leq
\tfrac{1}{4}\big(d(x,p)+\sqrt{2n}\big)^2
\end{equation}
for all $x\in M$, where $a_+ := \max\{0,a\}$. If
$p_1$ and $p_2$ are two minimum points, then their distance is bounded by
\begin{equation}\label{2.distbound}
d(p_1,p_2)\leq 5n+\sqrt{2n}.
\end{equation}
\end{lemma}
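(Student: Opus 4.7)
The argument splits into two independent estimates plus a combination step.

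\textbf{Preliminaries.} From Hamilton's identity $R+|\D f|^2-f=C_1$ and $R\geq 0$, we immediately get $|\D f|^2\leq f+C_1$, which forces $f+C_1\geq 0$ and shows $\phi:=2\sqrt{f+C_1}$ satisfies $|\D \phi|\leq 1$ wherever $f+C_1>0$. In particular $f$ is bounded below; existence of a minimum point $p$ will be a byproduct of the growth estimates (the sublevel sets $\{f\leq a\}$ are compact by the lower bound below).

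\textbf{Upper bound.} At any minimum $p$ of $f$ we have $\D f(p)=0$ and $\Hess f(p)\geq 0$, hence $\Delta f(p)\geq 0$. The traced equation $R+\Delta f=\tfrac{n}{2}$ then gives $R(p)\leq \tfrac{n}{2}$, and Hamilton's identity yields $f(p)+C_1=R(p)+|\D f(p)|^2=R(p)\leq \tfrac{n}{2}$. Integrating $|\D\phi|\leq 1$ along a minimizing geodesic from $p$ to $x$ then gives
\[
2\sqrt{f(x)+C_1}\leq 2\sqrt{f(p)+C_1}+d(x,p)\leq \sqrt{2n}+d(x,p),
\]
which is the stated upper bound after squaring.

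\textbf{Lower bound.} Let $\gamma:[0,d]\to M$ be a unit-speed minimizing geodesic from $p$ to $x$, $d=d(x,p)$, and write $X=\dot\gamma$. Restricting the soliton equation gives
\[
(f\circ\gamma)''(s)=\Hess f(X,X)=\tfrac{1}{2}-\Rc(X,X).
\]
Since $\D f(p)=0$, integrating from $0$ to $d$ yields
\[
\Scal{\D f(x),X(d)}=\tfrac{d}{2}-\int_0^d \Rc(X,X)\,ds.
\]
The left-hand side is controlled by $|\D f(x)|\leq\sqrt{f(x)+C_1}$, so the lower bound will follow once we show $\int_0^d \Rc(X,X)\,ds\leq\tfrac{d}{2}+O(n)$ with the implicit constant explicit enough to produce the $5n$ threshold. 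This is obtained via a standard second variation argument: for the parallel orthonormal normal frame $\{e_i\}_{i=1}^{n-1}$ along $\gamma$ and a Lipschitz cutoff $\psi$ with $\psi(0)=\psi(d)=0$, summing the index form inequalities for $V_i=\psi e_i$ gives
\[
\int_0^d\psi^2 \Rc(X,X)\,ds\leq (n-1)\int_0^d \dot\psi^2\,ds.
\]
Choosing $\psi(s)=s$ on $[0,1]$, $\psi=1$ on $[1,d-1]$, $\psi(s)=d-s$ on $[d-1,d]$ (assuming $d\geq 2$; the short-distance case being trivial) produces the clean bound $\int_0^d \psi^2\Rc(X,X)\,ds\leq 2(n-1)$. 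The residual integrals $\int_0^1(1-s^2)\Rc(X,X)\,ds$ and $\int_{d-1}^d(1-(d-s)^2)\Rc(X,X)\,ds$ are handled by substituting $\Rc(X,X)=\tfrac12-(f\circ\gamma)''$ and integrating by parts: the boundary terms produce $\D f(x)$ (which we reabsorb into the left-hand side) and $\D f(p)=0$, while the interior terms are controlled by the values of $f$ on the unit intervals near $p$ and $x$, which are in turn bounded via the upper bound just established. Rearranging yields $\sqrt{f(x)+C_1}\geq\tfrac{1}{2}(d-5n)$ once $d\geq 5n$, which is equivalent to the stated inequality with the $(\cdot)_+$ notation (when $d<5n$ there is nothing to prove since $f+C_1\geq 0$).

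\textbf{Distance between minima and final assembly.} If $p_1,p_2$ both realize the minimum of $f$, applying the upper bound at $p_2$ with basepoint $p_2$ gives $f(p_2)+C_1\leq \tfrac{n}{2}$, while applying the lower bound with basepoint $p_1$ gives $f(p_2)+C_1\geq\tfrac{1}{4}(d(p_1,p_2)-5n)_+^2$. Combining the two yields $(d(p_1,p_2)-5n)_+^2\leq 2n$, hence $d(p_1,p_2)\leq 5n+\sqrt{2n}$. The main technical obstacle is the careful bookkeeping in the second-variation step: one must use the already-proven upper bound on $f$ to absorb the boundary integrals arising from the integration by parts, and one must be attentive enough with the cutoff constants to reach the quadratic form $\tfrac14(d-5n)_+^2$ rather than an unquantified $\tfrac14(d-C(n))^2$.
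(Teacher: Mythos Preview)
Your overall strategy---the Lipschitz bound on $\sqrt{f+C_1}$ for the upper bound and the second variation argument with the tent cutoff $\psi$ for the lower bound---is exactly the paper's approach. There are, however, two genuine gaps in the way you have organized it.

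\textbf{Circularity in the existence of the minimum.} You assume a minimum point $p$ exists in order to prove the upper bound (you use $\Delta f(p)\geq 0$ to get $f(p)+C_1\leq n/2$) and also in the lower bound (you use $\nabla f(p)=0$ to drop a boundary term). You then claim existence of $p$ follows from the lower bound. This is circular. The paper avoids this by running the second variation estimate between two \emph{arbitrary} points $x,y$, obtaining
\[
\tfrac{d(x,y)}{2}+\tfrac{4}{3}-2n\;\leq\;\sqrt{f(x)+C_1}+\sqrt{f(y)+C_1}+1,
\]
which already forces $f\to\infty$ at infinity and hence gives the minimum. Only afterwards does one set $y=p$ and use $f(p)+C_1\leq n/2$. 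Your argument is easily repaired along these lines, but as written the logical order is broken.

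\textbf{Control of the residual near $x$.} You say the interior terms coming from the integration by parts on $\int_{d-1}^d(1-(d-s)^2)(f\circ\gamma)''\,ds$ are ``controlled by the values of $f$ on the unit interval near $x$, which are in turn bounded via the upper bound just established.'' But the upper bound gives $f+C_1\leq\tfrac14(d+\sqrt{2n})^2$ on that interval, which is $O(d^2)$ and useless here. The correct control---and what the paper does---is to stop after one integration by parts and bound the remaining integral $\int_{d-1}^d(d-s)(f\circ\gamma)'\,ds$ by $\sup_{[d-1,d]}\abs{\nabla f}\leq \sqrt{f(x)+C_1}+\tfrac12$, using the Lipschitz property of $\sqrt{f+C_1}$. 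This term then sits on the same side as the $\langle\nabla f(x),X\rangle$ you already have (in fact, after your ``reabsorption'' the $(f\circ\gamma)'(d)$ terms cancel identically and this $\abs{\nabla f}$ bound is what actually produces the $\sqrt{f(x)+C_1}$ on the left). So the mechanism is via $\abs{\nabla f}$, not via the values of $f$ and the upper bound.
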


\begin{lemma}[Volume growth]\label{2.ballslemma}
There exists a constant $C_2=C_2(n)<\infty$ such that every gradient shrinker $(M^n,g,f)$ with $p\in M$ as in Lemma \ref{2.growthlemma} satisfies the volume growth estimate
\begin{equation}\label{2.ballseqn}
\Vol B_r(p)\leq C_2r^n.
\end{equation}
\end{lemma}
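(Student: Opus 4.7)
The plan is to use the potential $f$ as a surrogate for the squared distance. By (\ref{2.auxiliaryeqn}) we have $f + C_1 = R + |\nabla f|^2 \geq 0$ since $R \geq 0$ by (\ref{2.scalarpos}), so
\[
\rho := 2\sqrt{f + C_1}
\]
is a well-defined nonnegative function on $M$. Lemma~\ref{2.growthlemma} gives the inclusion $B_r(p) \subseteq \{\rho \leq r + \sqrt{2n}\}$, so it suffices to prove a dimensional sublevel-set bound $V_\rho(r):=\Vol(\{\rho \leq r\}) \leq C(n)\, r^n$ and then translate: for $r\geq 1$ one absorbs the shift $\sqrt{2n}$ into the constant, and for small $r$ the local Euclidean density $\Vol(B_r(p))/r^n \to \omega_n$ at a smooth point handles the conclusion.

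The two key identities, obtained directly from the traced soliton equation (\ref{2.tracedsoleqn}) and from (\ref{2.auxiliaryeqn}), are
\[
|\nabla \rho|^2 \;=\; 1 - \frac{4R}{\rho^2} \;\leq\; 1, \qquad \Delta(\rho^2) \;=\; 4\Delta f \;=\; 2n - 4R \;\leq\; 2n.
\]
These say that $\rho$ behaves like a Euclidean distance function: it is $1$-Lipschitz, and the Laplacian of $\rho^2$ is bounded above by its Euclidean value $2n$. Applying the divergence theorem to $\rho^2$ on the sublevel set $\{\rho \leq r\}$ (which has smooth boundary for a.e.\ $r$ by Sard's theorem) yields the Bishop--Gromov-type input
\[
2r \int_{\{\rho = r\}} |\nabla \rho|\, dA \;=\; \int_{\{\rho \leq r\}} \Delta(\rho^2)\, dV \;\leq\; 2n\, V_\rho(r).
\]

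The remaining step is to convert this level-set inequality, together with the coarea identity $V_\rho'(r) = \int_{\{\rho = r\}} |\nabla \rho|^{-1}\, dA$, into the polynomial bound $V_\rho(r) \leq C(n)\, r^n$. Using Cauchy--Schwarz on the sphere $\{\rho=r\}$ and the bound $|\nabla \rho|\leq 1$, one derives a differential inequality for $V_\rho$ and integrates it to obtain Bishop--Gromov-type monotonicity for $V_\rho(r)/r^n$. The main obstacle is that $\rho$ is not a genuine distance function: its critical set contains every critical point of $f$ (notably $p$ itself, where $\nabla f$ vanishes), and $|\nabla \rho|$ degenerates there, so the standard distance-comparison argument does not apply verbatim. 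This is handled either by restricting to the regular level sets guaranteed by Sard's theorem and passing to the limit, or by exploiting the sharper identity $\rho\Delta\rho = n - 1 - 2R + 4R/\rho^2$ to obtain the cleaner comparison $\Delta \rho \leq (n-1)/\rho$ outside a bounded region of $M$ and then running a classical Bishop--Gromov argument against Euclidean space.
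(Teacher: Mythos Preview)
Your setup is the same as the paper's: define $\rho=2\sqrt{f+C_1}$, reduce to bounding $V_\rho(r)=\Vol\{\rho\le r\}$, and apply the divergence theorem to $\rho^2$ on sublevel sets. The gap is in the ``remaining step''. From $r\int_{\{\rho=r\}}|\D\rho|\,dA\le nV_\rho(r)$ and $V_\rho'(r)=\int_{\{\rho=r\}}|\D\rho|^{-1}\,dA$ you cannot extract $rV_\rho'\le nV_\rho$: since $|\D\rho|\le 1$ one has $\int|\D\rho|\le V_\rho'$, which is the \emph{wrong} direction, and Cauchy--Schwarz only yields $A(r)^2\le\tfrac{n}{r}V_\rho V_\rho'$, which is also useless here. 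The same obstruction defeats the $\Delta\rho\le(n-1)/\rho$ route: you get $I'(r)\le\tfrac{n-1}{r}V_\rho'(r)$ for $I(r)=\int_{\{\rho=r\}}|\D\rho|$, but no control on $V_\rho'$ itself. The point is not merely that $\rho$ has critical points; even at regular levels, $|\D\rho|^2=1-4R/\rho^2$ can be arbitrarily small wherever $R$ is close to $\rho^2/4$, so discarding the scalar curvature term is fatal.

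The paper (following Cao--Zhou and Munteanu) fixes this by keeping the \emph{identity} rather than the inequality. On $\{\rho=r\}$ one has $|\D\rho|=|\D\rho|^{-1}(1-4R/r^2)$, hence
\[
\int_{\{\rho=r\}}|\D\rho|\,dA \;=\; V_\rho'(r)-\tfrac{4}{r^2}S'(r),\qquad S(r):=\int_{\{\rho\le r\}}R\,dV,
\]
and the divergence identity becomes the exact ODE $rV_\rho'-nV_\rho=\tfrac{4}{r}S'-2S$, equivalently $(r^{-n}V_\rho)'=4r^{-n-2}S'-2r^{-n-1}S$. Integrating by parts from $r_0=\sqrt{2n+4}$ and using $S\ge0$, $S\le\tfrac{n}{2}V_\rho$ gives $V_\rho(r)\le 2r_0^{-n}V_\rho(r_0)\,r^n$ for $r\ge\sqrt{4n}$, and then $\Vol B_r(p)\le V_\rho(r+5n)$. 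Finally, your appeal to the local Euclidean density for small $r$ does not give a constant depending only on $n$ (it depends on the particular shrinker); the paper closes this by Bakry--Emery volume comparison on $B_{r_0+5n}(p)$, using $\Rc_f\ge0$ and the a priori bound $|\D f|\le C(n)$ there, to bound $V_\rho(r_0)$ by a dimensional constant.
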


The proofs are small but crucial improvements of the proofs by Cao-Zhou and Munteanu \cite{CZ09, Mun09}. In fact, their results are not strong enough for our purpose for which it is necessary, in particular, to remove the dependence on the geometry on a unit ball in Theorem 1.1 of \cite{CZ09} and to show that the constant in the
volume growth estimate can be chosen uniformly for a sequence of shrinkers. In order to keep this section compact, we moved the proofs of both lemmas to Appendix \ref{applemmas}.\\

\emph{From now on, we fix a point $p\in M$ where $f$ attains its
minimum.}\\

By Lemma \ref{2.growthlemma} and Lemma
\ref{2.ballslemma}, any function $\varphi$ that satisfies the growth
estimate
\begin{equation}\label{2.expgrow}
\abs{\varphi(x)}\leq Ce^{\alpha d(x,p)^2} \qquad \textrm{for some}\;
\alpha<\tfrac{1}{4}
\end{equation}
is integrable with respect to the measure
$e^{-f}dV$. In particular, the integral $\int_M e^{-f}dV$ is finite and $f$
can be normalized (by adding a constant if necessary) to satisfy the
normalization constraint (\ref{1.normalizationf}).\\

\emph{From now on, we will fix the normalization (\ref{1.normalizationf})}.\\

Let us now explain the logarithmic Sobolev inequality, compare with
Carrillo-Ni \cite{CN08}.
Any polynomial in $R,f,\abs{\D
f},\Lap f$ is integrable with respect to the measure
$e^{-f}dV$. Indeed, using one after another (\ref{2.scalarpos}), $0\leq\abs{\D f}^2$, (\ref{2.auxiliaryeqn}), and Lemma \ref{2.growthlemma}, we compute
\begin{equation}\label{2.scalarest}
0\leq R(x)\leq R(x)+\abs{\D f}^2(x)=f(x)+C_1\leq\tfrac{1}{4}\big(d(x,p)+\sqrt{2n}\big)^2,
\end{equation}
and using furthermore (\ref{2.tracedsoleqn}) this implies
\begin{equation}
-\tfrac{n}{2}\leq -\Lap f(x)\leq -\tfrac{n}{2}+\tfrac{1}{4}\big(d(x,p)+\sqrt{2n}\big)^2.
\end{equation}
So, any polynomial in $R,f,\abs{\D
f},\Lap f$ has at most polynomial growth and thus in particular satisfies the growth estimate (\ref{2.expgrow}). It follows that the entropy
\begin{align}
\mu(g):=\sW(g,f)&=\int_M\big(\abs{\D f}^2+R+f-n\big)\label{2.defnmu}
(4\pi)^{-n/2}e^{-f}dV
\end{align}
is well defined. To obtain another expression for the entropy, we will use the partial integration formula
\begin{equation}
\int_M\Lap f e^{-f} dV=\int_M\abs{\D f}^2 e^{-f} dV,\label{2.piform}
\end{equation}
which is justified as follows: Let
$\eta_r(x):=\eta(d(x,p)/r)$, where $0\leq\eta\leq 1$ is a cutoff
function such that $\eta(s)=1$ for $s\leq 1/2$ and $\eta(s)=0$ for
$s\geq 1$. Then
\begin{equation*}
\int_M\eta_r \Lap f e^{-f}dV=\int_M\eta_r \abs{\D f}^2
e^{-f}dV-\int_M\scal{\D \eta_r,\D f} e^{-f}dV.
\end{equation*}
Now, using the estimates for $f,\abs{\D f}$ and the volume growth, we see
that
\begin{equation*}
\int_M\abs{\D \eta_r}\abs{\D f} e^{-f}dV\leq C\int_{B_r(p)\setminus
B_{r/2}(p)}\tfrac{1}{r} d(x,p) e^{-\tfrac{(d(x,p)-5n)^2}{4}}dV\\
\leq Cr^ne^{-\tfrac{(r/2-5n)^2}{4}}
\end{equation*}
converges to zero for $r\to\infty$, and (\ref{2.piform}) follows from the dominated convergence theorem. Moreover, note that (\ref{2.tracedsoleqn}) and (\ref{2.auxiliaryeqn}) imply the formula
\begin{equation}
2\Lap f-\abs{\D f}^2+R+f-n=-C_1.
\end{equation}
Putting everything together, we conclude that
\begin{align}
\mu(g)=\int_M\big(2\Lap f-\abs{\D f}^2+R+f-n\big)
(4\pi)^{-n/2}e^{-f}dV=-C_1(g),
\end{align}
where we also used the normalization
(\ref{1.normalizationf}) in the last step. In other words, the \emph{auxiliary
constant} $C_1(g)$ of the gradient shrinker is minus the \emph{Perelman
entropy}. 
Carrillo-Ni
made the wonderful observation that Perelman's logarithmic Sobolev
inequality holds even for noncompact shrinkers without curvature
assumptions \cite[Thm. 1.1]{CN08}, i.e.
\begin{equation}
\inf\sW(g,\tilde{f}) \geq \mu(g),\label{2.logsobolev}
\end{equation}
where the infimum is taken over all
$\tilde{f}:M\to\RR\cup\{+\infty\}$ such that
$\tilde{u}=e^{-\tilde{f}/2}$ is smooth with compact support and
$\int_M \tilde{u}^2 dV=(4\pi)^{n/2}$. Essentially, this follows from
$\Rc_f=\Rc+\Hess f\geq 1/2$ and the Bakry-Emery theorem
\cite[Thm.~21.2]{Vil09}.\\

\begin{rem}
In fact, equality holds in
(\ref{2.logsobolev}), which can be seen as follows. First observe
that, as a function of $g$ and $\tilde{u}$,
\begin{equation}
\sW(g,\tilde{u})=(4\pi)^{-n/2}\int_M\left(4\abs{\D\tilde{u}}^2
+(R-n)\tilde{u}^2-\tilde{u}^2\log\tilde{u}^2\right)dV,
\end{equation}
and that one can take the infimum over all properly normalized
Lipschitz functions $\tilde{u}$ with compact support. Now, the
equality follows by approximating $u=e^{-f/2}$ by
$\tilde{u}_r:=C_r\eta_ru$, with $\eta_r$ as above and with constants
\begin{equation}
C_r=\sqrt{\frac{(4\pi)^{n/2}}{\int_M\eta_r^2 u^2 dV}}\searrow{1}
\end{equation}
to preserve the normalization. Indeed, arguing as before we see that
\begin{equation}
\begin{aligned}
\int_M(R-n)\eta_r^2u^2 &\to\int_M(R-n)u^2, &
\int_MC_r^2\eta_r^2u^2\log u^2 &\to\int_M u^2\log u^2,\\
\int_M\abs{\D(\eta_r u)}^2 &\to\int_M\abs{\D u}^2, &
\int_MC_r^2\eta_r^2\log(C_r^2\eta_r^2)u^2 &\to 0,
\end{aligned}
\end{equation}
which together yields $\sW(g,\tilde{u}_r)\to\sW(g,u)$.
\end{rem}

From
Perelman's logarithmic Sobolev inequality (\ref{2.logsobolev}) and
the local bounds for the scalar curvature (\ref{2.scalarest}), we get the following
non-collapsing lemma.

\begin{lemma}[Non-collapsing]\label{2.noncollapslemma} There exists a function
$\kappa(r)=\kappa(r,n,\ul{\mu})>0$ such that for every gradient
shrinker $(M^n,g,f)$ (with basepoint $p$ and normalization as
before) with entropy bounded below, $\mu(g)\geq\ul{\mu}$, we have
the lower volume bound $\Vol B_\delta(x)\geq\kappa(r)\delta^n$ for
every ball $B_\delta(x)\subset B_r(p)$, $0<\delta\leq 1$.
\end{lemma}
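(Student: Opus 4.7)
My strategy is Perelman's $\kappa$-non-collapsing argument. The inputs are the logarithmic Sobolev inequality (\ref{2.logsobolev}) and the local scalar curvature bound (\ref{2.scalarest}). Fix a ball $B_\delta(x)\subset B_r(p)$ with $\delta\in(0,1]$ and write $V(\sigma):=\Vol B_\sigma(x)$. By (\ref{2.scalarest}), $R\leq\Lambda(r):=\tfrac{1}{4}(r+\sqrt{2n})^2$ throughout $B_r(p)$. I would argue by contradiction, assuming $V(\delta)<\eps\,\delta^n$ for some small $\eps>0$ to be determined.

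Since the continuous function $h(\sigma):=V(\sigma)/\sigma^n$ satisfies $h(\sigma)\to\omega_n$ as $\sigma\to 0^+$ (where $\omega_n$ is the volume of the Euclidean unit ball), for $\eps<\omega_n$ the intermediate value theorem yields $\sigma^*\in(0,\delta]$ with $h(\sigma^*)=\eps$ and $h(\sigma)\geq\eps$ for all $\sigma\leq\sigma^*$; in particular $V(\sigma^*)/V(\sigma^*/2)\leq 2^n$. Take a Lipschitz cutoff $v(y)=\phi(d(x,y)/\sigma^*)$ with $\phi\equiv 1$ on $[0,1/2]$, $\phi\equiv 0$ outside $[0,1]$, $|\phi'|\leq C$, and form $\tilde u=a v$ with $a$ chosen to normalize $\tilde u$ for the scale-$\tau$ log-Sobolev inequality at $\tau=(\sigma^*)^2$. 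Plugging into $\sW(g,\tilde u,\tau)$: the gradient piece is $\lesssim V(\sigma^*)/V_0\leq 2^n$ using the doubling bound above (with $V_0:=\int v^2\,dV\geq V(\sigma^*/2)$); the scalar curvature piece is bounded by $\tau\Lambda(r)=(\sigma^*)^2\Lambda(r)\leq\Lambda(r)$; and the entropy piece is $\log[V_0/(\sigma^*)^n]+O(1)\leq\log\eps+O(1)$. Combining with $\sW(g,\tilde u,\tau)\geq\mu(g,\tau)\geq\ul\mu$ then yields $\log\eps\geq\ul\mu-\Lambda(r)-C(n)$, which contradicts any choice $\eps<e^{\ul\mu-\Lambda(r)-C(n)}$. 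Hence $V(\delta)\geq\kappa(r)\,\delta^n$ with $\kappa(r)$ of the form $e^{\ul\mu-\Lambda(r)-C(n)}$.

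\textbf{Main obstacle.} The subtle ingredient is that (\ref{2.logsobolev}) as written is at scale $\tau=1$, whereas the argument requires it at scale $\tau=(\sigma^*)^2\leq 1$. On a gradient shrinker, however, one has $\mu(g,\tau)\geq\mu(g,1)=\mu(g)\geq\ul\mu$ for every $\tau>0$: the normalization $\Rc+\Hess f=\tfrac12g$ picks out $\tau=1$ as the minimizer of $\tau\mapsto\mu(g,\tau)$, so the scale-$\tau$ version of the log-Sobolev holds with the same lower bound $\ul\mu$. With this scale-invariance in hand, the rest is a routine Perelman-style volume computation. An alternative (and essentially equivalent) way to bypass this subtlety is to run the contradiction argument above directly using only (\ref{2.logsobolev}) at $\tau=1$, which gives an extra factor $(\sigma^*)^{-2}$ in the gradient piece; this can be absorbed into the log term provided $\sigma^*$ is taken comparable to $1$, and then one iterates down in scale.
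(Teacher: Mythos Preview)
Your approach is the standard Perelman argument and you correctly identify the main obstacle: (\ref{2.logsobolev}) is only stated at $\tau=1$, whereas your test function lives at scale $\sigma^*\leq\delta\leq 1$. Your proposed fix --- that $\mu(g,\tau)\geq\mu(g,1)$ for all $\tau>0$ on a shrinker --- is morally correct, but it is not justified by anything available in the paper; establishing it rigorously for \emph{noncompact} shrinkers \emph{without curvature assumptions} requires either Perelman's monotonicity along the associated self-similar flow (delicate to justify in this generality) or an independent Bakry--Emery argument. Your fallback (``iterate down in scale'') is too vague to count as a proof: once you fix $\tau=1$, the gradient term scales like $(\sigma^*)^{-2}$ and you have not explained what replaces the intermediate-value step at each stage of the iteration.

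The paper sidesteps the scale issue entirely by a different device. It argues by contradiction with a \emph{sequence} of shrinkers and balls $B_{\delta_i}(x_i)\subset B_r(p_i)$ with $\delta_i^{-n}\Vol B_{\delta_i}(x_i)\to 0$, and then uses the Bakry--Emery volume comparison of Wei--Wylie (available since $\Rc_f\geq 0$ and $\abs{\D f}$ is locally bounded by (\ref{2.scalarest})) in two ways: first, to transfer the collapse up to unit scale, giving $\Vol B_1(x_i)\to 0$; second, to obtain a uniform doubling bound $\Vol B_1(x_i)\leq C(r,n)\Vol B_{1/2}(x_i)$. With these in hand, the Perelman computation runs with a test function supported in $B_1(x_i)$ using (\ref{2.logsobolev}) at $\tau=1$ only: the gradient term is controlled by the doubling constant, the scalar curvature term by $\Lambda(r)$, and the normalization constant $c_i\lesssim\Vol B_1(x_i)\to 0$ drives $\sW\to-\infty$, contradicting $\mu(g_i)\geq\ul\mu$. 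In short, the role your intermediate-value scale $\sigma^*$ plays is taken over by volume comparison, and that is precisely what allows the paper to stay at $\tau=1$ throughout.
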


The proof is strongly related to Perelman's proof for finite-time Ricci flow singularities (see Kleiner-Lott \cite[Sec.~13]{KL08} for a nice and detailed exposition), and can be found in Appendix \ref{applemmas}. Given a lower bound $\mu(g)\geq\ul{\mu}$, we  also get an upper bound
$\mu(g) \leq \ol{\mu}=\ol{\mu}(\ul{\mu},n)$ using
$\tilde{u}=c^{-1/2}\eta(d(x,p))$ as test function. Of course, the
conjecture is $\mu(g)\leq 0$ even for noncompact shrinkers without
curvature assumptions.\\

Equipped with the above lemmas, we can now easily prove the non-collapsed pointed Gromov-Hausdorff convergence in the general case, and the pointed smooth Cheeger-Gromov convergence in the case where the curvature is uniformly bounded below.

\begin{thm}[Non-collapsed Gromov-Hausdorff convergence]\label{2.GHlimit}
Let $(M_i^n,g_i,f_i)$ be a sequence of gradient shrinkers with
entropy uniformly bounded below, $\mu(g_i)\geq\ul{\mu}>-\infty$, and
with basepoint $p_i$ and normalization as before. Then the sequence is volume non-collapsed at finite distances from the basepoint and a subsequence $(M_i,d_i,p_i)$ converges to a complete metric space in the pointed Gromov-Hausdorff sense.
\end{thm}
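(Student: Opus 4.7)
The plan is to deduce the result from Gromov's precompactness theorem, which states that a sequence of pointed proper metric spaces $(X_i,d_i,p_i)$ subconverges in the pointed Gromov--Hausdorff sense to a complete (in fact proper) metric space provided that for every $R>0$ and every $\eps>0$ there is a uniform bound $N(R,\eps)<\infty$ on the number of $\eps$-balls needed to cover $B_R(p_i)$. So the task reduces to verifying this uniform total boundedness, using only the volume upper bound from Lemma \ref{2.ballslemma} and the non-collapsing lower bound from Lemma \ref{2.noncollapslemma}.

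Concretely, fix $R>0$ and $0<\eps\leq 1$. For each $i$, let $\{x_j^{(i)}\}_{j=1}^{N_i}\subset B_R(p_i)$ be a maximal $\eps$-separated subset, so that the balls $B_{\eps/2}(x_j^{(i)})$ are pairwise disjoint, lie inside $B_{R+\eps/2}(p_i)\subset B_{R+1}(p_i)$, and the balls $B_{\eps}(x_j^{(i)})$ cover $B_R(p_i)$. By Lemma \ref{2.noncollapslemma} applied with $r=R+1$, each small ball satisfies
\begin{equation*}
\Vol B_{\eps/2}(x_j^{(i)})\,\geq\,\kappa(R+1)\,(\eps/2)^n,
\end{equation*}
while Lemma \ref{2.ballslemma} gives $\Vol B_{R+1}(p_i)\leq C_2(R+1)^n$. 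Summing the volumes of the disjoint small balls and dividing yields
\begin{equation*}
N_i\,\leq\,\frac{C_2(R+1)^n}{\kappa(R+1)(\eps/2)^n}\,=:\,N(R,\eps),
\end{equation*}
which is independent of $i$. This is precisely the uniform total boundedness hypothesis of Gromov's theorem, so a subsequence of $(M_i,d_i,p_i)$ converges in the pointed Gromov--Hausdorff sense to a complete metric space $(X_\infty,d_\infty,p_\infty)$. The non-collapsing statement in the theorem is just the content of Lemma \ref{2.noncollapslemma} restated along the sequence.

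I do not anticipate a genuine obstacle here: both ingredients -- the uniform Euclidean volume upper bound and the $\kappa$-non-collapsing at bounded distance from the basepoint -- have been established in the preceding lemmas, and the argument is the standard volume-comparison route to Gromov precompactness. The only minor point worth being careful about is that Lemma \ref{2.noncollapslemma} only provides non-collapsing for radii $\delta\leq 1$, which is why one restricts to $\eps\leq 1$ and absorbs the remaining constants into $N(R,\eps)$; for $\eps>1$ one simply uses the cover obtained from $\eps=1$.
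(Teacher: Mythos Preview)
Your proposal is correct and follows essentially the same route as the paper: invoke Gromov's precompactness criterion and verify uniform total boundedness by playing the volume upper bound of Lemma~\ref{2.ballslemma} against the non-collapsing lower bound of Lemma~\ref{2.noncollapslemma}. The paper phrases the criterion via packing numbers (disjoint $\delta$-balls in $B_r(p)$) rather than covering numbers, but your maximal $\eps$-separated set argument is the standard bridge between the two and the estimates are the same.
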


\begin{proof}
The first part is Lemma \ref{2.noncollapslemma}. For the second part, to find a subsequence that converges in the pointed Gromov-Hausdorff sense, it suffices to find uniform bounds $N(\delta,r)$ for the number of disjoint $\delta$-balls that
fit within an $r$-ball centered at the basepoint \cite[Prop.~5.2]{Gro99}. Assume $\delta\leq 1$ without loss of generality. By Lemma \ref{2.ballslemma} the ball $B_r(p)$ has volume at most $C_2r^n$, while by Lemma \ref{2.noncollapslemma} each ball $B_{\delta}(x)\subset B_r(p)$ has volume at least $\kappa\delta^n$. Thus, there can be at most $N(\delta,r)=C_2 r^n/\kappa\delta^n$ disjoint $\delta$-balls in $B_r(p)$.
\end{proof}

\begin{rem} Alternatively, the Gromov-Hausdorff convergence also follows from the volume comparison theorem of Wei-Wylie for the Bakry-Emery Ricci tensor \cite{WW09}, using the estimates for the soliton potential from this section. This holds even without entropy and energy bounds, but in that case the limit can be collapsed and very singular.
\end{rem}

\begin{thm}[Smooth convergence in the curvature bounded below case]\label{2.smoothlimit}
Let $(M_i^n,g_i,f_i)$ be a sequence of gradient shrinkers (with
basepoint $p_i$ and normalization as before) with entropy uniformly
bounded below, $\mu(g_i)\geq\ul{\mu}>-\infty$, and curvature
uniformly bounded below, $\Rm_{g_i}\geq \ul{K}>-\infty$. Then a
subsequence $(M_i,g_i,f_i,p_i)$ converges to a gradient shrinker $(M_\infty,g_\infty,f_\infty,p_\infty)$ in the pointed smooth Cheeger-Gromov sense 
(i.e.~there exist an exhaustion of $M_\infty$ by
open sets $U_i$ containing $p_\infty$ and smooth embeddings
$\phi_i:U_i\to M_i$ with $\phi_i(p_\infty)=p_i$ such that $(\phi_i^\ast g_i,\phi_i^\ast f_i)$ converges to $(g_{\infty},f_\infty)$ in $C^{\infty}_{\mathrm{loc}}$).
\end{thm}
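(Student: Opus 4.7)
The strategy is to promote the lower curvature bound together with the soliton identities into a local two-sided Riemann bound, apply a standard pointed Cheeger-Gromov compactness argument, and then bootstrap to smooth convergence using elliptic regularity for the soliton system.

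\emph{Step 1 (Local two-sided Riemann bound).} By the remark after Lemma \ref{2.noncollapslemma}, the entropy is also bounded above, $\mu(g_i)\leq\ol\mu(\ul\mu,n)$, so the auxiliary constants $C_1(g_i)=-\mu(g_i)$ are uniformly bounded. Equation (\ref{2.scalarest}) then gives
\begin{equation*}
0\leq R_{g_i}(x)\leq \tfrac{1}{4}\big(d(x,p_i)+\sqrt{2n}\big)^2,
\end{equation*}
which is uniformly bounded on any ball $B_r(p_i)$. Writing $R_{g_i}=2\sum_{k<l}K(e_k,e_l)$ in an orthonormal frame and using $\Rm_{g_i}\geq\ul K$ (so every sectional curvature is at least $\ul K$), each individual sectional curvature is bounded above by a constant depending only on $r,\ul K,\ul\mu,n$. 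This gives $\abs{\Rm_{g_i}}\leq\Lambda(r)$ on $B_r(p_i)$.

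\emph{Step 2 (Cheeger-Gromov subconvergence).} Combining Step 1 with the upper volume bound from Lemma \ref{2.ballslemma} and the non-collapsing from Lemma \ref{2.noncollapslemma}, one gets the injectivity radius estimate needed to apply the standard pointed Cheeger-Gromov compactness theorem. After passing to a subsequence, there exist a smooth manifold $M_\infty$, an exhaustion by open sets $U_i$ containing $p_\infty$, and smooth embeddings $\phi_i\colon U_i\to M_i$ with $\phi_i(p_\infty)=p_i$ such that $\phi_i^* g_i\to g_\infty$ in $C^{1,\alpha}_{\mathrm{loc}}$.

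\emph{Step 3 (Convergence of the potentials).} From (\ref{2.auxiliaryeqn}) we read off $\abs{\D f_i}^2=f_i+C_1(g_i)-R_{g_i}$, which together with Lemma \ref{2.growthlemma} and the uniform bound on $C_1(g_i)$ yields a uniform $C^1$-bound for $f_i$ on $B_r(p_i)$. The soliton equation further gives $\Hess f_i=\tfrac{1}{2}g_i-\Rc_{g_i}$, so $\Hess f_i$ is uniformly bounded as well. Arzel\`a-Ascoli, applied to $\phi_i^* f_i$ after passing to a further subsequence, produces a limit $f_\infty\in C^{1,\alpha}_{\mathrm{loc}}(M_\infty)$ with $\phi_i^*f_i\to f_\infty$. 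Passing to the limit in (\ref{1.solitoneq}) shows that $(M_\infty,g_\infty,f_\infty)$ satisfies the shrinker equation weakly.

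\emph{Step 4 (Smooth upgrade and main obstacle).} In harmonic coordinates for $\phi_i^*g_i$, whose uniform size is ensured by the curvature and injectivity radius bounds, the Ricci tensor has the form $-\tfrac{1}{2}g^{kl}\partial_k\partial_l g_{ij}+Q(g,\partial g)$, so the coupled system
\begin{equation*}
\Rc_{ij}+\D_i\D_j f=\tfrac{1}{2}g_{ij}, \qquad \Lap f+R=\tfrac{n}{2}
\end{equation*}
is quasilinear elliptic in $(g,f)$. Standard elliptic bootstrapping upgrades the convergence from $C^{1,\alpha}$ to $C^{k,\alpha}$ for every $k$, hence to $C^\infty_{\mathrm{loc}}$. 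The main conceptual obstacle is really confined to Step 1: without the soliton equation and the entropy-based bound on $C_1$, one could not convert a one-sided curvature bound into a two-sided one. Steps 2-4 are then a routine application of elliptic Cheeger-Gromov compactness to the soliton system.
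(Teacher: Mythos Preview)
Your proposal is correct and follows essentially the same approach as the paper: the key observation in both is Step~1, where the local scalar curvature bound (\ref{2.scalarest}) combined with $\Rm_{g_i}\geq\ul K$ yields a local two-sided Riemann bound, after which everything is standard compactness plus elliptic regularity. The only minor technical difference is the order of operations in the regularity upgrade: the paper first bootstraps the elliptic system $\Lap\Rm=\D f*\D\Rm+\Rm+\Rm*\Rm$, $\Lap f=\tfrac{n}{2}-R$ to obtain uniform $C^k$ bounds on $\Rm$ and $f$ for all $k$ and then invokes the \emph{smooth} Cheeger--Gromov compactness theorem directly, whereas you first pass to a $C^{1,\alpha}$ limit and then bootstrap the soliton system in harmonic coordinates (which is exactly the system (\ref{3.finellsys}) the paper uses later in the orbifold regularity step); both routes are standard and equivalent here.
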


\begin{proof}
Recall the following Cheeger-Gromov compactness theorem from the very beginning of the introduction: For every sequence $(M^n_i,g_i,p_i)$ of complete Riemannian manifolds with uniform local bounds for the curvatures and all its derivatives,
\begin{equation}
\sup_{B_r(p_i)}\abs{\D^k\Rm_{g_i}}\leq C_k(r),\label{2.cond1}
\end{equation}
and with a uniform local volume-noncollapsing bound around the basepoint,
\begin{equation}
\Vol_{g_i} B_1(p_i)\geq\kappa,\label{2.cond2}
\end{equation}
we can find a subsequence that converges to a limit $(M_\infty,g_\infty,p_\infty)$ in the pointed smooth Cheeger-Gromov sense.\\
Moreover, if we have also uniform local bounds for the shrinker potential and all its derivatives,
\begin{equation}
\sup_{B_r(p_i)}\abs{\D^k f_i}\leq C_k(r),\label{2.cond3}
\end{equation}
then by passing to another subsequence if necessary the functions $\phi_i^\ast f_i$ will also converge to some function $f_\infty$ in $C^{\infty}_{\mathrm{loc}}$ (the embeddings $\phi_i:U_i\to M_i$ come from the pointed Cheeger-Gromov convergence). From the very definition of smooth convergence it is clear that the shrinker equation will pass to the limit, i.e.~that $(M_\infty,g_\infty,f_\infty)$ will be a gradient shrinker. Thus, it remains to verify (\ref{2.cond1}), (\ref{2.cond2}), and (\ref{2.cond3}) for our sequence of shrinkers.\\
By Lemma \ref{2.noncollapslemma}, we have uniform local volume non-collapsing, in particular condition (\ref{2.cond2}) is satisfied.
From (\ref{2.scalarest}) we have uniform local bounds for the scalar curvature, and together with the
assumption $\Rm_{g_i}\geq \ul{K}$ this gives uniform local Riemann bounds,
\begin{equation}
\sup_{B_r(p_i)}\abs{\Rm_{g_i}}\leq C_0(r).\label{2.given1}
\end{equation}
From (\ref{2.scalarest}) and the bounds $\ul{\mu}\leq-C_1(g_i)\leq\ol{\mu}$, we get local $C^1$ bounds for $f_i$,
\begin{equation}
\sup_{B_r(p_i)}\abs{ f_i}\leq C_0(r),\qquad \sup_{B_r(p_i)}\abs{\D f_i}\leq C_1(r).\label{2.given2}
\end{equation}
Finally, by some very well known arguments, we can bootstrap the elliptic system
\begin{equation}
\begin{split}
\Lap\Rm&=\D f*\D\Rm+\Rm+\Rm*\Rm,\\
\Lap f &= \tfrac{n}{2}-R,
\end{split}
\end{equation}
starting from (\ref{2.given1}) and (\ref{2.given2}) to arrive at (\ref{2.cond1}) and (\ref{2.cond3}). Here, the second equation is just the traced soliton equation (\ref{2.tracedsoleqn}), while the first equation is obtained from the soliton equation (\ref{2.soleqn}) and the Bianchi identity as follows:
\begin{align}
\D_p\D_p R_{ijk\ell}&=-\D_p\D_k R_{ij\ell p}-\D_p\D_\ell R_{ijpk}\nonumber\\
&=-\D_k\D_p R_{ij\ell p}-\D_\ell\D_p R_{ijpk}+(\Rm\ast\Rm)_{ijk\ell}\nonumber\\
&=\D_k(\D_iR_{j\ell}-\D_j R_{i\ell})+\D_\ell(\D_jR_{ik}-\D_i R_{jk})+(\Rm\ast\Rm)_{ijk\ell}\nonumber\\
&=\D_k(R_{ji\ell p}\D_p f)+\D_\ell(R_{ijkp}\D_p f)+(\Rm\ast\Rm)_{ijk\ell}\nonumber\\
&=(\D f*\D\Rm+\Rm+\Rm*\Rm)_{ijk\ell}.\label{2.ellder}
\end{align}
Here, we used the Bianchi identity and the commutator rule in the first three lines and in the fourth and fifth line we used the soliton equation. This finishes the proof of the theorem.
\end{proof}

\begin{rem}
The more interesting case without positivity assumptions is treated in Section \ref{SectionCG}. A related simple and well known example for singularity formation is
the following. Consider the Eguchi-Hanson metric $g_{\mathrm{EH}}$
\cite{EH79}, a Ricci-flat metric on $TS^2$ which is asymptotic to
$\RR^4/\ZZ_2$ (remember that the unit tangent bundle of the 2-sphere
is homeomorphic to $S^3/\ZZ_2$). Then
$g_i:=\tfrac{1}{i}g_{\mathrm{EH}}$ is a sequence of Ricci-flat
metrics, that converges to $\RR^4/\ZZ_2$ in the orbifold
Cheeger-Gromov sense. In particular, an orbifold singularity
develops as the central 2-sphere (i.e. the zero section) shrinks to
a point. For the positive K{\"a}hler-Einstein case, see Tian \cite{Tia90}, in particular Theorem 7.1.
\end{rem}

\begin{rem}
For a sequence of gradient shrinkers with entropy uniformly bounded below, by Lemma \ref{2.growthlemma} and Lemma \ref{2.ballslemma},
$(4\pi)^{-n/2}e^{-f_i}dV_{g_i}$ is a sequence of uniformly tight
probability measures. Thus, a subsequence of
$(M_i,d_i,e^{-f_i}dV_{g_i},p_i)$ converges to a pointed measured complete metric space
$(M_{\infty},d_{\infty},\nu_{\infty},p_{\infty})$ in the pointed
\emph{measured} Gromov-Hausdorff sense. By (\ref{2.scalarest}), Lemma
\ref{2.growthlemma} and a Gromov-Hausdorff version of the
Arzela-Ascoli theorem, there exists a continuous limit function
$f_\infty:M_\infty\to\RR$. It is an interesting question, if
$\nu_{\infty}$ equals $e^{-f_\infty}$ times the Hausdorff measure.
\end{rem}

\begin{rem}
It follows from the recent work of Lott-Villani and Sturm that
the condition $\Rc_f\geq 1/2$ is preserved in a weak sense
\cite{Vil09}.
\end{rem}

\section{Proof of orbifold Cheeger-Gromov convergence}\label{SectionCG}

In this section, we prove Theorem \ref{1.mainthm1}. For convenience of the reader, we will also explain some steps that are based on well known compactness techniques.\\

The structure of the proof is the following: First, we show that we have a uniform estimate for the local Sobolev constant (Lemma \ref{3.sobconst}). Using this, we prove the $\eps$-regulartiy Lemma \ref{3.epsreglemma}, which says that we get uniform bounds for the curvatures on balls with small energy. We can then pass to a smooth limit away from locally finitely many singular points using in particular the energy assumption (\ref{1.Rmbound}) and the $\eps$-regulartiy lemma.  This limit can be completed as a metric space by adding locally finitely many points. Finally, we prove that the singular points are of smooth orbifold type.\\

We start by giving a precise definition of the convergence.

\begin{defn}[Orbifold Cheeger-Gromov convergence]\label{3.defconv}
A sequence of gradient shrinkers $(M^n_i,g_i,f_i,p_i)$
converges to an orbifold gradient shrinker $(M^n_\infty,g_\infty,f_\infty,p_\infty)$ in the pointed orbifold Cheeger-Gromov sense, if the following properties hold.
\begin{enumerate}
\item There exist a locally finite set $S\subset M_\infty$, an exhaustion of $M_\infty\setminus S$ by open sets $U_i$ and smooth embeddings $\phi_i:U_i \to M_i$, such that $(\phi_i^*g_i,\phi_i^*f_i)$ converges to $(g_\infty,f_\infty)$ in $C^\infty_{\textrm{loc}}$ on $M_\infty\setminus S$.
\item The maps $\phi_i$ can be extended to pointed Gromov-Hausdorff approximations yielding a convergence $(M_i,d_i,p_i)\to(M_\infty,d_\infty,p_\infty)$ in the pointed Gromov-Hausdorff sense.
\end{enumerate}
\end{defn}

Here, an orbifold gradient shrinker is a complete metric space that is a
smooth gradient shrinker away from locally finitely many singular
points. At a singular point $q$, $M_\infty$ is modeled on $\RR^n/\Gamma$ for some finite subgroup $\Gamma\subset O(n)$ and there is an associated covering $\RR^n\supset B_\varrho(0)\setminus\{0\}\stackrel{\pi}{\to} U\setminus\{q\}$ of some neighborhood $U\subset M_\infty$ of $q$ such that $(\pi^*g_\infty,\pi^*f_\infty)$ can be extended smoothly to a gradient shrinker over the origin.\\

For the arguments that follow, it will be very important to have a uniform local Sobolev constant that works simultaneously for all shrinkers in our sequence.

\begin{lemma}[Estimate for the local Sobolev constant]\label{3.sobconst}
There exist $C_S(r)=C_S(r,n,\ul{\mu})<\infty$ and $\delta_0(r)=\delta_0(r,n,\ul{\mu})>0$ such that for every gradient shrinker $(M^n,g,f)$ (with basepoint $p$ and normalization as
before) with $\mu(g)\geq\ul{\mu}$, we have
\begin{equation}
\norm{\varphi}_{L^{\frac{2n}{n-2}}}\leq C_S(r) \norm{\D \varphi}_{L^2}
\end{equation}
for all balls $B_\delta(x)\subset B_r(p), 0<\delta\leq\delta_0(r)$ and all functions $\varphi\in C^1_c(B_\delta(x))$.
\end{lemma}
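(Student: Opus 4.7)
The plan is to derive the local Sobolev inequality from Perelman's logarithmic Sobolev inequality (\ref{2.logsobolev}) in two stages.

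First, I would substitute $\tilde u = e^{-\tilde f/2}$ into (\ref{2.logsobolev}) to rewrite it as
\[
\int_M \bigl(4|\D \tilde u|^2 + R\tilde u^2 - \tilde u^2 \log \tilde u^2\bigr)\, dV \;\geq\; (4\pi)^{n/2}(n + \ul\mu)
\]
for every $\tilde u \in C^\infty_c(M)$ with $\int \tilde u^2\, dV = (4\pi)^{n/2}$. Restricting to test functions supported in $B_r(p)$, the local scalar-curvature estimate (\ref{2.scalarest}) gives $R \leq \tfrac{1}{4}(r+\sqrt{2n})^2 =: C_R(r)$, so the $R\tilde u^2$ contribution can be absorbed into a constant depending only on $r$, $n$, $\ul\mu$. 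A short rescaling $\tilde u = (4\pi)^{n/4}\varphi/\norm{\varphi}_{L^2}$ then yields, for every $\varphi \in C^1_c(B_r(p))$, a defective Euclidean-type logarithmic Sobolev inequality
\[
\int \varphi^2 \log\!\bigl(\varphi^2/\norm{\varphi}_{L^2}^2\bigr)\, dV \;\leq\; 4\norm{\D\varphi}_{L^2}^2 + A(r,n,\ul\mu)\norm{\varphi}_{L^2}^2.
\]

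Second, I would invoke the standard conversion of a defective log-Sobolev inequality into an honest Sobolev inequality. The cleanest route runs through ultracontractivity: by the Davies/Bakry--Gentil--Ledoux equivalence, the inequality above is equivalent to a short-time bound $\norm{e^{-t(-\Lap+\alpha)}}_{L^1\to L^\infty} \leq C t^{-n/2}$ for the associated heat semigroup, which in turn is equivalent (Varopoulos) to the Euclidean-type Sobolev inequality. Choosing $\delta_0(r)$ small enough that the defect $A\norm{\varphi}_{L^2}^2$ stays subordinate to $\norm{\D\varphi}_{L^2}^2$ on balls of radius $\leq \delta_0(r)$ then produces the claimed estimate $\norm{\varphi}_{L^{2n/(n-2)}} \leq C_S(r,n,\ul\mu)\norm{\D\varphi}_{L^2}$ for $\varphi \in C^1_c(B_\delta(x))$.

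The hard part will be the conversion step: a defective log-Sobolev inequality is strictly weaker than a tight one, and care is required to absorb the defect without losing the sharp $L^{2n/(n-2)}$ exponent. This is what forces both $\delta_0$ and $C_S$ to depend on $r$ through $C_R(r)$, matching the form of the conclusion. A more elementary alternative would be a direct Moser-type iteration on $\varphi^p$ with $p \to \infty$, bootstrapping $L^p$ bounds from the defective log-Sobolev; either route produces the stated local Sobolev inequality.
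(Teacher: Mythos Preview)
Your approach via the log-Sobolev inequality is genuinely different from the paper's, but as written it has a real gap. The Davies/Bakry--Ledoux equivalence you invoke requires a \emph{family} of log-Sobolev inequalities
\[
\int \varphi^2\log\bigl(\varphi^2/\norm{\varphi}_{L^2}^2\bigr)\,dV \;\le\; \eps\,\norm{\D\varphi}_{L^2}^2 + \beta(\eps)\,\norm{\varphi}_{L^2}^2 \qquad\text{for all } \eps>0,
\]
with $\beta(\eps)\sim -\tfrac{n}{2}\log\eps$, in order to conclude ultracontractivity and hence the Sobolev inequality with the critical exponent $\tfrac{2n}{n-2}$. A single defective log-Sobolev inequality with fixed coefficient $4$ in front of the Dirichlet energy yields only \emph{hypercontractivity} (Gross--Nelson), which is strictly weaker and does not give Sobolev; think of the Gaussian on $\RR^n$, which satisfies a tight log-Sobolev but no Euclidean Sobolev inequality. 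Your sentence ``choosing $\delta_0(r)$ small enough that the defect $A\norm{\varphi}_{L^2}^2$ stays subordinate to $\norm{\D\varphi}_{L^2}^2$'' presupposes a uniform Poincar\'e inequality on small balls, which is essentially what you are trying to prove. The Moser-iteration alternative has the same problem: iterating the single inequality on $\varphi^p$ does not close up without the $\eps$-family.

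The natural repair is to use Perelman's functional at all scales $\tau\in(0,1]$: the bound $\mu(g,\tau)\ge\ul\mu$ for $\tau\le 1$ produces exactly the required family with $\beta(\eps)=-\tfrac{n}{2}\log\eps+C(r,n,\ul\mu)$, and then the Ye/Zhang argument goes through. But establishing $\mu(g,\tau)\ge\ul\mu$ for $\tau<1$ on a noncompact shrinker \emph{without curvature bounds} is not immediate from the Carrillo--Ni result (which is specifically $\tau=1$, via Bakry--Emery), and would require justifying Perelman monotonicity along the associated ancient flow in this generality. The paper sidesteps all of this with a purely geometric argument: it reduces the $L^2$-Sobolev to the $L^1$-Sobolev inequality, uses the Federer--Fleming identification with the isoperimetric constant, bounds the latter via Croke's visibility-angle estimate, and finally controls the visibility angle from below using the non-collapsing Lemma~\ref{2.noncollapslemma} together with the Wei--Wylie volume comparison for $\Rc_f\ge 0$.
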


\begin{proof}
The main point is that the estimate for the local Sobolev constant will follow from the noncollapsing and the volume comparison for the Bakry-Emery Ricci tensor. The detailed argument goes as follows:\\
The first reduction is that it suffices to control the optimal constant $C_{1}(B)$ in the $L^1$-Sobolov inequality,
\begin{equation}
\norm{\psi}_{L^{\frac{n}{n-1}}(B)}\leq C_{1}(B) \norm{\D \psi}_{L^1(B)},\label{3.l1sob}
\end{equation}
for all $\psi\in C^1_c(B)$, where in our case $B$ will always be an open ball in a Riemannian manifold. Indeed, applying (\ref{3.l1sob}) for $\psi=\varphi^{(2n-2)/(n-2)}$ and using H{\"o}lder's inequality we can compute
\begin{align*}
\Big(\norm{\varphi}_{L^{\frac{2n}{n-2}}(B)}\Big)^{\!\tfrac{2n-2}{n-2}}
&\leq \tfrac{2n-2}{n-2} C_{1}(B) \norm{\varphi^{n/(n-2)}\D\varphi}_{L^1(B)}\\
&\leq \tfrac{2n-2}{n-2} C_{1}(B) \Big(\norm{\varphi}_{L^{\frac{2n}{n-2}}(B)}\Big)^{\!\tfrac{n}{n-2}} \norm{\D \varphi}_{L^2(B)},
\end{align*}
so the $L^1$-Sobolov inequality (\ref{3.l1sob}) implies the $L^2$-Sobolov inequality
\begin{equation}
\norm{\varphi}_{L^{\frac{2n}{n-2}}(B)}\leq C_{2}(B) \norm{\D \varphi}_{L^2(B)},
\end{equation}
with $C_2(B)=\tfrac{2n-2}{n-2} C_{1}(B)$.
Next, it is a classical fact, known under the name Federer-Fleming theorem, that the optimal constant $C_{1}(B)$ in (\ref{3.l1sob}) is equal to the optimal constant $C_I(B)$ in the isoperimetric inequality,
\begin{equation}
\abs{\Omega}^{\tfrac{n-1}{n}}\leq C_I(B)\abs{\partial \Omega},
\end{equation}
for all regions $\Omega\Subset B$ with $C^1$-boundary. Third, by a theorem of Croke \cite[Thm.~11]{Cro80}, the isoperimetric constant can be estimated by
\begin{equation}
C_I(B)\leq C(n)\omega(B)^{-\tfrac{n+1}{n}},
\end{equation}
where $C(n)<\infty$ is an explicit constant whose value we do not need and $\omega(B)$ is the visibility angle defined as
\begin{equation}
\omega(B)=\inf_{y \in B}\abs{U_y}/\abs{S^{n-1}},\label{3.visang}
\end{equation}
where $U_y=\{v\in T_yB\; ;\; \abs{v}=1,\textrm{the geodesic}\; \gamma_v \;\textrm{is minimizing up to}\; \partial B\}.$\\
Putting everything together, to finish the proof of our lemma it suffices to find a lower bound for the visibility angle (\ref{3.visang}) for $B=B_\delta(x)\subset B_r(p)$ inside a shrinker for $\delta\leq \delta_0(r)$, where $\delta_0(r)$ will be chosen sufficiently small later. To find such a lower bound, we will use the volume comparison theorem for the Bakry-Emery Ricci tensor due to Wei-Wylie \cite{WW09} which we now explain:\\
Fix $y\in M$, use exponential polar coordinates around $y$ and write $dV=\mathcal{A}(r,\theta)dr\wedge d\theta$ for the volume element, where $d\theta$ is the standard volume element on the unit sphere $S^{n-1}$. Let $\mathcal{A}_{f}(r,\theta)=\mathcal{A}(r,\theta)e^{-f}$. Note that $\Rc_f=\Rc+\D^2 f\geq 0$ by the soliton equation. The angular version of the volume comparison theorem for the Bakry-Emery Ricci tensor \cite[Thm 1.2a]{WW09} says that if in addition $\abs{\D f}\leq a$ on $B_R(y)$ then
\begin{equation}
\frac{\int_0^R\mathcal{A}_{f}(s,\theta)ds}{\int_0^r\mathcal{A}_{f}(s,\theta)ds}\leq
e^{aR}\left(\frac{R}{r}\right)^n.
\end{equation}
for $0<r\leq R$. If we have moreover $\max_{B_R(y)}f\leq \min_{B_R(y)}f+ b$ then this implies
\begin{equation}
{\int_0^R\mathcal{A}(s,\theta)ds}\leq
e^{aR+b}\left(\frac{R}{r}\right)^n {\int_0^r\mathcal{A}(s,\theta)ds},
\end{equation}
and finally by sending $r$ to zero we obtain the form of the volume comparison estimate that we will actually use, namely
\begin{equation}
{\int_0^R\mathcal{A}(s,\theta)ds}\leq \tfrac{1}{n}e^{aR+b}R^n.\label{volcompest}
\end{equation}
In our application everything will stay inside a ball $B_{r+1}(p)$ around the basepoint of the soliton, so by (\ref{2.scalarest}) we can take $a:=\tfrac{1}{2}(r+1+\sqrt{2n})$ and $b:=a^2$.\\
Now, coming back to actually estimating the visibility angle of $B_\delta(x)\subset B_r(p)$, we let $y\in B_\delta(x)$ and apply the above ideas. Since the volume is computed using exponential polar coordinates around $y$, we have the estimate
\begin{equation}
\abs{B_1(x)}-\abs{B_\delta(x)}\leq\int_{U_y}\int_0^{1+\delta}\mathcal{A}(s,\theta)dsd\theta,
\end{equation}
where $U_y$ denotes the set of unit tangent vectors in whose direction the geodesics are minimizing up to the boundary of $B_\delta(x)$. Using (\ref{volcompest}), we can estimate this by
\begin{equation}
\abs{B_1(x)}-\abs{B_\delta(x)}\leq \tfrac{1}{n}e^{2a+b}\abs{U_y}(1+\delta)^n,
\end{equation}
and minimizing over $y\in B_{\delta}(x)$ we obtain the inequality
\begin{equation}
\abs{B_1(x)}-\abs{B_\delta(x)}\leq C\omega(B_\delta(x))(1+\delta)^n\label{vis1}
\end{equation}
with $C=C(r,n)= \tfrac{1}{n}e^{2a+b}\abs{S^{n-1}}$. Moreover, using (\ref{volcompest}) again, we obtain the upper bound
\begin{equation}
\abs{B_\delta(x)}\leq\int_{S^{n-1}}\int_0^{\delta}\mathcal{A}(s,\theta)dsd\theta\leq C\delta^n.\label{vis2}
\end{equation}
Finally, we have the lower volume bound
\begin{equation}
\abs{B_1(x)}\geq \kappa,\label{vis3}
\end{equation}
for $\kappa=\kappa(r+1,n,\ul{\mu})$ from Lemma \ref{2.noncollapslemma}. If we now choose $\delta_0=\delta_0(r,n,\ul{\mu})=(\kappa/2C)^{1/n}$, then putting together (\ref{vis1}), (\ref{vis2}) and (\ref{vis3}) gives the lower bound
\begin{equation}
\omega(B_\delta(x))\geq\frac{\kappa}{2^{n+1}C}
\end{equation}
for the visibility angle for $\delta\leq\delta_0$, and this finishes the proof of the lemma.
\end{proof}

Using the uniform estimate for the local Sobolev constant, we obtain the following $\eps$-regularity lemma.

\begin{lemma}[$\eps$-regularity]
\label{3.epsreglemma}
There exist $\eps_1(r)=\eps_1(r,n,\ul{\mu})>0$ and $K_\ell(r)=K_\ell(r,n,\ul{\mu})<\infty$ such that for every gradient shrinker $(M^n,g,f)$ (with basepoint $p$ and normalization as
before) with $\mu(g)\geq\ul{\mu}$ and for every ball $B_\delta(x)\subset B_r(p), 0<\delta\leq\delta_0(r)$, we have the implication
\begin{equation}\label{3.epsreg}
\norm{\Rm}_{L^{n/2}(B_{\delta}(x))}\leq \eps_1(r)
\Rightarrow
\sup_{B_{\delta/4}(x)}\abs{\D^\ell \Rm} \leq
\frac{K_\ell(r)}{\delta^{2+\ell}}\norm{\Rm}_{L^{n/2}(B_{\delta}(x))}.
\end{equation}
\end{lemma}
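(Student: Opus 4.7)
The plan is a standard $\eps$-regularity scheme combining a Bochner-type differential inequality for $\abs{\Rm}$, a Moser iteration that exploits the uniform local Sobolev constant from Lemma \ref{3.sobconst} together with the smallness of $\norm{\Rm}_{L^{n/2}(B_\delta)}$, and a final elliptic bootstrap for the higher derivatives. By equation (\ref{2.ellder}) in the proof of Theorem \ref{2.smoothlimit}, any gradient shrinker satisfies the elliptic system
\begin{equation*}
\Lap\Rm=\D f\ast\D\Rm+\Rm+\Rm\ast\Rm,
\end{equation*}
and by (\ref{2.scalarest}) one has $\abs{\D f}\leq\tfrac{1}{2}(d(\cdot,p)+\sqrt{2n})\leq C(r)$ on $B_r(p)$. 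Pairing this system with $\Rm$ and applying Kato's inequality yields the Bochner-type inequality
\begin{equation*}
\abs{\Rm}\Lap\abs{\Rm}\geq -C(r)\abs{\Rm}\abs{\D\abs{\Rm}}-\abs{\Rm}^2-c_n\abs{\Rm}^3.
\end{equation*}

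Next, fix $B_\delta(x)\subset B_r(p)$ with $\delta\leq\delta_0(r)$ and set $u:=\abs{\Rm}$. For each $q\geq n/2$ I would test the Bochner inequality against $\eta^2 u^{q-1}$, where $\eta$ is a standard cutoff supported in $B_\rho(x)$, equal to $1$ on $B_{\rho'}(x)$, and satisfying $\abs{\D\eta}\leq 2/(\rho-\rho')$. Integration by parts produces the Dirichlet energy of $v:=\eta u^{q/2}$ up to lower-order gradient-drift and linear terms (both absorbable via Cauchy-Schwarz and the uniform bound on $\abs{\D f}$), plus a cubic contribution $\int\eta^2 u^{q+1}$ that I would handle via H\"older,
\begin{equation*}
\int\eta^2 u^{q+1}\leq\norm{u}_{L^{n/2}(B_\delta)}\Big(\int v^{\tfrac{2n}{n-2}}\Big)^{\!\tfrac{n-2}{n}},
\end{equation*}
and absorb into the Sobolev term from Lemma \ref{3.sobconst} once $\eps_1(r)$ is chosen small enough. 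What remains is the reverse H\"older estimate
\begin{equation*}
\norm{u}_{L^{\chi q}(B_{\rho'})}\leq\Big(\tfrac{Cq}{(\rho-\rho')^2}\Big)^{\!1/q}\norm{u}_{L^q(B_\rho)},\qquad \chi=\tfrac{n}{n-2}.
\end{equation*}
Iterating with $q_k=\tfrac{n}{2}\chi^k$ on a geometrically nested sequence of balls from $B_\delta(x)$ to $B_{\delta/2}(x)$ and using the telescoping identity $\sum_k 1/q_k=1$ to track the $\delta$-exponent yields the $\ell=0$ bound
\begin{equation*}
\sup_{B_{\delta/2}(x)}\abs{\Rm}\leq \frac{K_0(r)}{\delta^2}\norm{\Rm}_{L^{n/2}(B_\delta)}.
\end{equation*}

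For the higher derivatives, once this pointwise bound is available on $B_{\delta/2}(x)$, differentiated versions of (\ref{2.ellder}) together with iterated derivatives of the traced soliton equation $\Lap f=\tfrac{n}{2}-R$ yield linear elliptic systems for $\D^\ell\Rm$ whose coefficients are controlled in terms of $r,n,\ul{\mu}$. Standard interior $L^p$ and Schauder estimates applied on a chain of shrinking balls from $B_{\delta/2}(x)$ down to $B_{\delta/4}(x)$ then deliver the $\delta^{-2-\ell}$-scaled bound claimed in (\ref{3.epsreg}). I expect the main subtlety to lie in the Moser step: the drift coefficient $\D f$ is bounded uniformly in $r$ but carries no decay in $\delta$, so one must check carefully that it behaves as a genuine lower-order perturbation and that the smallness threshold $\eps_1(r)$ can be chosen independently of $\delta\in(0,\delta_0(r)]$. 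Both facts rest on the uniform $C^0$-bound on $\D f$ over the whole ambient ball $B_r(p)$.
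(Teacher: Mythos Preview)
Your proposal is correct and follows essentially the same route as the paper: derive the elliptic equation $\Lap\Rm=\D f\ast\D\Rm+\Rm+\Rm\ast\Rm$, use Kato's inequality to get a scalar differential inequality for $u=\abs{\Rm}$, run Moser iteration with the uniform Sobolev constant of Lemma~\ref{3.sobconst} while absorbing the cubic term via the smallness of $\norm{\Rm}_{L^{n/2}}$, and then bootstrap for the higher derivatives. The paper handles the drift term $\D f\ast\D\Rm$ by Young's inequality (producing an extra $\abs{\D u}^2$ piece that is also absorbed) rather than keeping it as a first-order term as you do, but this is a cosmetic difference and your observation that the uniform $C^0$-bound on $\D f$ over $B_r(p)$ makes it a genuine lower-order perturbation independent of $\delta$ is exactly the point.
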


\begin{proof}
The gradient shrinker version of the evolution equation of the Riemann tensor under Ricci flow, $\dt\Rm=\Lap\Rm+Q(\Rm)$, is the elliptic equation
\begin{equation}
\Lap\Rm=\D f*\D\Rm+\Rm+\Rm*\Rm.\label{3.ellequ}
\end{equation}
Here, we used (\ref{2.soleqn}) to eliminate $\D^2f$ in $L_{\D
f}\Rm=\D f*\D\Rm+\D^2f*\Rm$. Alternatively, we have given another derivation of the elliptic equation (\ref{3.ellequ}) in (\ref{2.ellder}). Now, we set $u:=\abs{\Rm}$ and compute
\begin{equation}\label{3.ellipticineq}
\begin{split}
-u\Lap u&=-\tfrac{1}{2}\Lap u^2+\abs{\D u}^2\\
&=-\tfrac{1}{2}\Lap \abs{\Rm}^2+\abs{\D \abs{\Rm}}^2\\
&=-\scal{ \Rm,\Lap\Rm}-\abs{\D\Rm}^2+\abs{\D\abs{\Rm}}^2.
\end{split}
\end{equation}
By equation (\ref{3.ellequ}) and Young's inequality, we can estimate
\begin{equation}
\begin{split}
-\scal{\Rm,\Lap \Rm}&\leq C_3\left(\abs{\Rm}\abs{\D f}\abs{\D \Rm}+\abs{\Rm}^2+\abs{\Rm}^3\right)\\
&\leq\tfrac{1}{10}\abs{\D \Rm}^2+\left(1+\tfrac{10}{4}C_3\abs{\D f}^2\right)C_3\abs{\Rm}^2+C_3\abs{\Rm}^3,
\end{split}
\end{equation}
for some constant $C_3=C_3(n)<\infty$ depending only on the dimension. Finally we use Kato's inequality $\abs{\D\abs{\Rm}}\leq\abs{\D \Rm}$, and the estimate (\ref{2.scalarest}) for $\abs{\D f}$. Putting everything together, we obtain the elliptic inequality
\begin{equation}\label{3.ellin}
-u\Lap u\leq \tfrac{1}{10}\abs{\D u}^2+C_4u^2+C_3u^3
\end{equation}
on $B_r(p)$, where $C_4=C_4(r,n):=(1+\tfrac{5}{8}C_3(r+\sqrt{2n})^2)C_3$.
Given an elliptic inequality like (\ref{3.ellin}) it is well known to PDE-experts that if the $L^{n/2}$-norm of $u$ is sufficiently small on a ball, then one gets $L^\infty$-bounds on a smaller ball, more precisely
\begin{equation}\label{uepsreg}
\norm{u}_{L^{n/2}(B_{\delta}(x))}\leq \eps
\Rightarrow
\sup_{B_{\delta/2}(x)}\abs{u} \leq
\frac{K}{\delta^{2}}\norm{u}_{L^{n/2}(B_{\delta}(x))},
\end{equation}
for some constants $\eps>0$ and $K<\infty$. For convenience of the reader, we sketch the necessary Moser-iteration argument here: To keep the notation reasonably concise let us assume $\delta=1$ and $n=4$, the general case works similarly. Choose a cutoff-function $0\leq\eta\leq 1$ that is $1$ on $B_{3/4}(x)$, has support in $B_{1}(x)$, and satisfies $\abs{\D \eta}\leq 8$. Multiplying (\ref{3.ellin}) by $\eta^2$ and integrating by parts we obtain
\begin{equation}
\tfrac{9}{10}\int_M\eta^2\abs{\D u}^2dV\leq 2\int_M\eta \abs{\D\eta}u\abs{\D u}dV+\int_M \big(C_4 \eta^2 u^2 + C_3\eta^2 u^3\big)dV.
\end{equation}
Dealing with the first term on the right hand side by Young's inequality and absorption, this gives the estimate
\begin{equation}
\tfrac{1}{2}\int_M\eta^2\abs{\D u}^2dV\leq (C_4+160)\int_{B_1} u^2dV + C_3\int_M\eta^2 u^3dV.
\end{equation}
For the last term, using H\"older's inequality, the assumption that the energy on $B_1$ is less than $\eps$, and the Sobolev-inequality, we get
\begin{equation}
\begin{split}
\int_M\eta^2 u^3dV &\leq\left(\int_{B_1} u^2 dV\right)^{\!1/2}\left(\int_M(\eta u)^4 dV\right)^{\!1/2}\\
&\leq \eps C_S^2\int_M\abs{\D(\eta u)}^2 dV\\
&\leq 2\eps C_S^2\int_M\eta^2\abs{\D u}^2 dV +50\eps C_S^2\int_{B_1} u^2 dV,
\end{split}
\end{equation}
where $C_S<\infty$ is the local Sobolev constant on $B_1$. The main idea is that if we choose $\eps$ so small that $2\eps C_S^2C_3\leq\tfrac{1}{4}$ then the $\int \eta^2\abs{\D u}^2$ term can be absorbed, giving
\begin{equation}
\tfrac{1}{4}\int_M \eta^2\abs{\D u}^2dV\leq (C_4+200)\int_{B_1} u^2dV,
\end{equation}
and using the Sobolev inequality we arrive at the $L^4$-estimate
\begin{equation}
\norm{u}_{L^{4}(B_{3/4})}\leq 2C_S\sqrt{C_4+200}\norm{u}_{L^{2}(B_{1})}.
\end{equation}
Now we choose a sequence of radii $r_k=\tfrac{1}{2}+\tfrac{1}{2^k}$ interpolating between $r_1=1$ and $r_{\infty}=\tfrac{1}{2}$. We multiply (\ref{3.ellin}) by $\eta_k^2u^{p_k}$, where $p_k=2^k-2$, and $0\leq\eta_k\leq 1$ is a cutoff function that equals $1$ on $B_{r_{k+1}}$, has support in $B_{r_k}$, and satisfies $\abs{\D \eta_k}\leq 2/(r_k-r_{k+1})$.
Carrying out similar steps as above we obtain the iterative estimates
\begin{equation}
\norm{u}_{L^{2^{k+1}}(B_{r_{k+1}})}\leq C_k\norm{u}_{L^{2^k}(B_{r_k})}.
\end{equation}
The product of the constants $C_k$ converges and sending $k\to\infty$ gives the desired estimate
\begin{equation}
\norm{u}_{L^\infty(B_{1/2})}\leq K\norm{u}_{L^{2}(B_1)}.
\end{equation}
Note that the estimate (\ref{uepsreg}) is of course only useful if we can get uniform constants $\eps>0$ and $K<\infty$ for our sequence of shrinkers. This crucial point is taken care of by Lemma \ref{3.sobconst}, so we indeed get constants $\eps_1(r)=\eps_1(r,n,\ul{\mu})>0$ and $K_0(r)=K_0(r,n,\ul{\mu})<\infty$, such that
\begin{equation}
\norm{\Rm}_{L^{n/2}(B_{\delta}(x))}\leq \eps_1(r)
\Rightarrow
\sup_{B_{\delta/2}(x)}\abs{\Rm} \leq
\frac{K_0(r)}{\delta^{2}}\norm{\Rm}_{L^{n/2}(B_{\delta}(x))},
\end{equation}
for every ball $B_\delta(x)\subset B_r(p), 0<\delta\leq\delta_0(r)$. Once one has $L^\infty$ control, the hard work is done and it is  standard to bootstrap the elliptic equation (\ref{3.ellequ}) to get $C^\infty$ bounds on the ball $B_{\delta/4}(x)$. The only slightly subtle point is that higher derivatives of $f$ appear when differentiating (\ref{3.ellequ}), but one can get rid of them again immediately using the soliton equation (\ref{2.soleqn}). This finishes the proof of the $\eps$-regularity lemma.
\end{proof}

Let us now explain how to finish the proof of Theorem \ref{1.mainthm1}. Let $(M^n_i,g_i,f_i)$ be a sequence of gradient shrinkers satisfying the assumptions of Theorem \ref{1.mainthm1}. By Theorem \ref{2.GHlimit}, we can assume (after passing to a subsequence) that the sequence converges in the pointed Gromov-Hausdorff sense. By passing to another subsequence, we can also assume that the auxiliary constants converge.\\
Let $r<\infty$ large and $0<\delta\leq\delta_1(r,E(r),n,\ul{\mu})$ small enough. The assumption (\ref{1.Rmbound}) gives a uniform bound $E(r)$ for the energy contained in $B_r(p_i)$. So there can be at most $\tfrac{E_1(r)}{\eps_1(r)}$ disjoint $\delta$-balls in $B_r(p_i)$ that contain energy more than $\eps_1(r)$, and away from those bad balls we get $C^\infty$-estimates for the curvatures using the $\eps$-regularity lemma. Recall that we also have volume-noncollapsing by Lemma \ref{2.noncollapslemma}, and that we get $C^\infty_{\textrm{loc}}$ bounds for $f_i$ in regions with bounded curvature, using the elliptic equation
\begin{equation}
\Lap f=\tfrac{n}{2}-R.
\end{equation}
Thus, putting everything together (and playing around with the parameters $r$ and $\delta$ a bit), we can find on any $(M_i,g_i)$ suitable balls $B_\delta(x_i^k(\delta))$, $1\leq k\leq L_i(r)\leq L(r)=L(r,E(r),n,\ul{\mu})$, such that on
\begin{equation}\label{3.defMird}
X_i:={B_{r}(p_i)} \setminus \bigcup_{k=1}^{L_i(r)}B_\delta(x_i^k(\delta)) \subset M_i,
\end{equation}
we have the estimates
\begin{equation}
\begin{split}
\sup_{X_i}\abs{\D^\ell\Rm_{g_i}}&\leq C_\ell(\delta,r,n,\ul{\mu})\\
\sup_{X_i}\abs{\D^\ell f_i}&\leq C_\ell(\delta,r,n,\ul{\mu}).\\
\end{split}
\end{equation}
Together with the volume-noncollapsing, this is exactly what we need to pass to a smooth limit. Thus, sending $r\to\infty$ and $\delta\to 0$ suitably and passing to a diagonal subsequence, we obtain a (possibly incomplete) smooth limit gradient shrinker. Since we already know, that the manifolds $M_i$ converge in the pointed Gromov-Hausdorff sense, this limit can be completed as a metric space by adding locally finitely many points and the convergence is in the sense of Definition \ref{3.defconv}. We have thus proved Theorem \ref{1.mainthm1} up to the statement that the isolated singular points are of orbifold shrinker type.\\

This claimed orbifold structure at the singular points is a local statement, so we can essentially refer to \cite{CS07,Zha06}. Nevertheless, let us sketch the main steps, following Tian \cite[Sec. 3 and 4]{Tia90} closely (see also \cite{And89,BKN89} for similar proofs).\\

Step 1 ($C^0$-multifold): The idea is that blowing up around a singular point will show that the tangent cone is a union of finitely many flat cones over spherical space forms. Improving this a bit, one also gets $C^0$-control over $g$, and thus the structure of a so called $C^0$-multifold (``multi" and not yet ``orbi", since we have to wait until the next step to see that ``a union of finitely many flat cones over spherical space forms" can actually be replaced by ``a single flat cone over a spherical space form"). The precise argument goes as follows: Near an added point $q\in S\subset M_\infty$, we have
\begin{equation}\label{3.c0multi}
\abs{\D^\ell \Rm_{g_\infty}}_{g_\infty}(x)\leq\frac{\eps(\varrho(x))}{\varrho(x)^{2+\ell}},
\end{equation}
where $\varrho(x)=d_\infty(x,q)$. Here and in the following, $\eps(\varrho)$ denotes a quantity that tends to zero for $\varrho\to 0$ and we always assume that $\varrho$ is small enough. With $\eps(\varrho)\to0$ in (\ref{3.c0multi}), together with the Bakry-Emery volume comparison and the non-collapsing, it follows that the tangent cone at $q$ is a finite union of flat cones over spherical space forms $S^{n-1}/\Gamma_\beta$. The tangent cone is unique and by a simple volume argument we get an explicit bound (depending on $r,n,\ul{\mu}$) for the order of the orbifold groups $\Gamma_\beta$ and the number of components $\sharp\{\beta\}$. As in \cite[Lemma 3.6, Eq. (4.1)]{Tia90} there exist a neighborhood $U\subset M_\infty$ of $q$ and for every component $U_\beta$ of $U\setminus \{q\}$ an associated covering $\pi_\beta:B_\varrho^\ast=B_\varrho(0)\setminus\{0\}\to U_\beta$ such that $g^\beta:=\pi_\beta^\ast g_\infty$ can be extended to a $C^0$-metric over the origin with the estimates
\begin{equation}
\begin{split}
&\sup_{B_\varrho^\ast}\abs{g^\beta-g_E}_{g_E}\leq \eps(\varrho),\\
\abs{D^I g^\beta}_{g_E}(x)&\leq\frac{\eps(\varrho(x))}{\varrho(x)^{\abs{I}}},\quad x\in B_\varrho^\ast,\; 1\leq\abs{I}\leq 100,
\end{split}
\end{equation}
where $g_E$ is the Euclidean metric, $D$ the Euclidean derivative and $I$ a multiindex.\\

Step 2 ($C^0$-orbifold): The idea is that if $U\setminus \{q\}$ had two or more components, than all geodesics in an approximating sequence would pass through a very small neck, but this yields a contradiction to the volume comparison theorem. For the precise argument, let $q\in S \subset M_\infty$ be an added point and choose points $x_i\in M_i$ converging to $q$. By the non-collapsing and the Bakry-Emery volume comparison with the bounds for $f_i$ and $\abs{\D f_i}$, there exists a constant $C<\infty$ such that for $\varrho$ small enough any two points in $\partial B_{\varrho}(x_i)$ can be connected by a curve in $B_{\varrho}(x_i)\setminus B_{\varrho/C}(x_i)$ of length less than $C\varrho$. This follows by slightly modifying the proof of \cite[Lemma 1.2]{AndChe91} and \cite[Lemma 1.4]{AbrGro90}. Since the convergence is smooth away from $q$, it follows that $U\setminus \{q\}$ is connected. In particular, the tangent cone at $q$ consists of a single flat cone over a spherical space form $S^{n-1}/\Gamma$.\\

Step 3 ($C^\infty$-orbifold): The final step is to get $C^\infty$ bounds in suitable coordinates. Let $g^1$ be the metric on $B_\varrho^\ast$ from Step 1 and 2. In the case $n=4$, using Uhlenbeck's method for removing finite energy point singularities in the Yang-Mills field \cite{Uhl82}, we get an improved curvature decay
\begin{equation}
\abs{\Rm_{g^1}}_{g^1}(x)\leq\frac{1}{\abs{x}^{\delta}}\label{3.curvdec}
\end{equation}
for $\delta>0$ as small as we want on a small enough punctured ball $B_\varrho^\ast$. The proof goes through almost verbatim as in \cite[Lemma 4.3]{Tia90}. The only difference is that instead of the Yang-Mills equation we use
\begin{equation}
\D_iR_{ijk\ell}=\D_kR_{\ell j}-\D_\ell R_{kj}=-\D_k\D_\ell\D_jf +\D_\ell\D_k\D_jf =
R_{\ell kjp}\D_p f
\end{equation}
and the estimates for $\abs{\D f}$ from Section \ref{SectionGH}. The point is that the Yang-Mills equation $\D_iR_{ijk\ell}=0$ is satisfied up to some lower order term that can be dealt with easily. The case $n\geq 5$ is more elementary, and Sibner's test function \cite{Si85} gives $L^\infty$ bounds for the curvature, in particular (\ref{3.curvdec}) is also satisfied in this case.\\
Due to the improved curvature decay, there exists a diffeomorphism $\psi:B_{\varrho/2}^\ast\to \psi(B^\ast_{\varrho/2})\subset B_\varrho^\ast$ that extends to a homeomorphism over the origin such that $\psi^\ast g^1$ extends to a $C^{1,\alpha}$ metric over the origin (for any $\alpha<1-\delta$). By composing with another diffeomorphism (denoting the composition by $\varphi$), we can assume that the standard coordinates on $B_{\varrho/4}$ are harmonic coordinates for $\varphi^\ast g^1$. Finally, let $\pi:=\pi_1\circ \varphi$, $g:=\pi^\ast g_\infty$ and $f:=\pi^\ast f_\infty$.
Then, for $(g,f)$ we have the elliptic system
\begin{equation}
\begin{split}\label{3.finellsys}
\Lap_g f &=\abs{\D f}_g^2-f+\tfrac{n}{2}-C,\\
\Rc_g &=\tfrac{1}{2}g-\Hess_gf.
\end{split}
\end{equation}
This is indeed elliptic, since $R_{ij}(g)=-\tfrac{1}{2}\sum_k\partial_k\partial_k g_{ij}+Q_{ij}(g,\partial g)$ in harmonic coordinates. It is now standard to bootstrap (\ref{3.finellsys}) starting with the $C^{1,\alpha}$-bound for $g$ and the $C^{0,1}$-bound for $f$ to obtain $C^{\infty}$-bounds for $(g,f)$ and to conclude that $(g,f)$ can be extended to a smooth gradient shrinker over the origin. This finishes the proof of Theorem \ref{1.mainthm1}.\\

\begin{rem}
Every added point is a singular point. Indeed, suppose $\Gamma$ is trivial and $K_i=\abs{\Rm_{g_i}}_{g_i}(x_i)=\max_{B_\varrho(x_i)}\abs{\Rm_{g_i}}_{g_i}\to\infty,\; x_i\to q$ for some subsequence. Then, a subsequence of $(M_i,K_ig_i,x_i)$ converges to a non-flat, Ricci-flat manifold with the same volume ratios as in Euclidean space, a contradiction. 
\end{rem}

\begin{rem}
As discovered by Anderson \cite{And89}, one can use the following two observations to rule out or limit the formation of singularities a priori: For $n$ odd, $\RR P^{n-1}$ is the only nontrivial spherical space form and it does not bound a smooth compact manifold. For $n=4$, every nontrivial orientable Ricci-flat ALE manifold has nonzero second Betti number.
\end{rem}

\section{A local Chern-Gauss-Bonnet argument}\label{SectionGB}

In this section, we prove Theorem \ref{1.mainthm2}. To explain and motivate the Gauss-Bonnet with cutoff argument, we will first prove a weaker version (Proposition \ref{4.propHess}).\\

Recall from Section  \ref{SectionGH} that $f$ grows like one-quarter distance squared, that $R$ and $\abs{\D f}^2$ grow at most quadratically, and that the volume growth is at most Euclidean. These growth estimates will be used frequently in the following.\\

The next lemma, first observed by Munteanu-Sesum \cite{MS09}, will be very
useful in the following. To keep this section self-contained, we give a quick proof.

\begin{lemma}[Weighted $L^2$ estimate for Ricci]\label{4.sesumlemma}
For $\lambda>0$ and $\ul{\mu}>-\infty$ there exist constants
$C(\lambda)=C(\lambda,\ul{\mu},n)<\infty$ such that for every
gradient shrinker $(M^n,g,f)$ with $\mu(g)\geq\ul{\mu}$ and
normalization as before,
\begin{equation}\label{4.sesumtrick}
\int_M \abs{\Rc}^2 e^{-\lambda f}dV \leq C(\lambda) < \infty.
\end{equation}
\end{lemma}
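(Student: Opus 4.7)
The plan is to extract $\abs{\Rc}^2$ from the elliptic identity (\ref{2.ellipteqR}), rewritten as $2\abs{\Rc}^2 = R + \scal{\D f, \D R} - \Lap R$, and integrate it against $\eta_r^2 e^{-\lambda f}\,dV$, where $\eta_r(x) = \eta(d(x,p)/r)$ is the standard cutoff introduced before (\ref{2.piform}). After integrating the $\Lap R$ term by parts, I expect to arrive at
\begin{equation*}
2\int_M \eta_r^2 \abs{\Rc}^2 e^{-\lambda f}\,dV = \int_M \eta_r^2 R\, e^{-\lambda f}\,dV + (1-\lambda)\!\int_M \eta_r^2 \scal{\D f, \D R} e^{-\lambda f}\,dV + 2\int_M \eta_r \scal{\D R, \D \eta_r} e^{-\lambda f}\,dV.
\end{equation*}

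The main obstacle is controlling the two terms involving $\D R$, since there is no obvious a priori bound on $\abs{\D R}$. The key input will be Hamilton's identity (\ref{2.bianchi}), which gives $\scal{\D f, \D R} = 2\Rc(\D f, \D f)$ and, by Cauchy--Schwarz applied componentwise, $\abs{\D R} \leq 2\abs{\Rc}\abs{\D f}$. Using these together with Young's inequality with a small parameter $\eps > 0$, I would dominate the two $\D R$-terms by
\begin{equation*}
\eps \int_M \eta_r^2 \abs{\Rc}^2 e^{-\lambda f}\,dV + C_\eps \int_M \bigl(\eta_r^2 \abs{\D f}^4 + \abs{\D \eta_r}^2 \abs{\D f}^2\bigr) e^{-\lambda f}\,dV,
\end{equation*}
and absorb the first piece into the left-hand side by choosing $\eps$ small.

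It then remains to show that the surviving integrals are uniformly bounded as $r \to \infty$. From $R \geq 0$ and the Hamilton identity (\ref{2.auxiliaryeqn}), $R \leq f + C_1$ and $\abs{\D f}^2 \leq f + C_1$; combined with $C_1 = -\mu(g) \leq -\ul{\mu}$, the quadratic growth of $f$ from Lemma \ref{2.growthlemma}, and the Euclidean volume bound from Lemma \ref{2.ballslemma}, each integrand on the right is dominated by a polynomial in $d(\cdot,p)$ times $e^{-\lambda f}$, where the weight decays like $e^{-\lambda d^2/4}$ at infinity. Thus each integral is finite with a bound depending only on $\lambda, \ul{\mu}, n$. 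Sending $r \to \infty$ and applying monotone/dominated convergence (with the same dominant used to control the cutoff terms) then yields the desired estimate (\ref{4.sesumtrick}).
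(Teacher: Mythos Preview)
Your proposal is correct and, once you substitute $\scal{\D f,\D R}=2\Rc(\D f,\D f)$ and $\scal{\D\eta_r,\D R}=2\Rc(\D\eta_r,\D f)$ from (\ref{2.bianchi}), your displayed identity becomes (up to an overall factor of $2$) exactly the identity the paper derives; the paper simply obtains it more directly by writing $\abs{\Rc}^2=\scal{\Rc,\tfrac12 g-\D^2 f}$ and integrating by parts once using $\divop(e^{-f}\Rc)=0$, rather than passing through the second-order equation (\ref{2.ellipteqR}) and then integrating $\Lap R$ by parts. The absorption via Young's inequality and the control of the remaining terms via the growth estimates of Section~\ref{SectionGH} are identical to the paper's argument.
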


\begin{proof}
Take a cutoff function $\eta$ as in Section \ref{SectionGH} and set
$\eta_r(x)=\eta(d(x,p)/r)$. Note that $\divop(e^{-f}\Rc)=0$ by
(\ref{2.bianchi}). Using this, the soliton equation, a partial
integration and the inequality $ab\leq a^2/4+b^2$, we compute
\begin{align*}
\int_M \eta_r^2\abs{\Rc}^2 e^{-\lambda f}dV &= \int_M \eta_r^2
\scal{\tfrac{1}{2}g-\D^2 f,\Rc} e^{-\lambda f}dV\\
&=\int_M \big(\tfrac{1}{2}\eta_r^2R +(1-\lambda)\eta_r^2\Rc(\D f,\D f)
+2\eta_r \Rc(\D\eta_r,\D f)\big)e^{-\lambda f}dV\\
&\leq \tfrac{1}{2}\int_M \eta_r^2 \abs{\Rc}^2 e^{-\lambda f}dV
+ \int_M \eta_r^2 \big(\tfrac{1}{2}R+(1-\lambda)^2\abs{\D
f}^4\big)e^{-\lambda f}dV\\
&\quad + 4 \int_M\abs{\D\eta_r}^2\abs{\D f}^2e^{-\lambda f}dV.
\end{align*}
The first term can be absorbed. The second term is uniformly bounded
and the last term converges to zero as $r\to \infty$ by the growth
estimates from Section \ref{SectionGH}.
\end{proof}

As a consequence of Lemma \ref{4.sesumlemma}, we can replace the
Riemann energy bound in Theorem \ref{1.mainthm1} by a Weyl energy
bound in dimension four.

\begin{cor}[Weyl implies Riemann energy condition]
Every sequence of $4$-dimensional gradient shrinkers $(M_i,g_i,f_i)$
(with normalization and basepoint as usual) with entropy bounded
below, $\mu(g_i)\geq \ul\mu$, and a local Weyl energy bound
\begin{equation}
\int_{B_r(p_i)}\abs{W_{g_i}}_{g_i}^2dV_{g_i} \leq C(r) < \infty,
\quad \forall i,r
\end{equation}
satisfies the energy condition (\ref{1.Rmbound}).
\end{cor}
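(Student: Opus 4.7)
The plan is to reduce the Riemann $L^2$ bound to the assumed Weyl bound plus a Ricci $L^2$ bound, and to obtain the latter from Lemma \ref{4.sesumlemma} by trading the weight $e^{-\lambda f}$ against the quadratic growth of $f$ on a fixed ball.

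First, I would recall the standard orthogonal decomposition of the Riemann tensor in dimension four. Writing the traceless Ricci as $\overset{\circ}{\Rc} = \Rc - \tfrac{1}{4}Rg$, one has
\begin{equation*}
\abs{\Rm}^2 = \abs{W}^2 + 2\abs{\overset{\circ}{\Rc}}^2 + \tfrac{1}{6}R^2 = \abs{W}^2 + 2\abs{\Rc}^2 - \tfrac{1}{3}R^2,
\end{equation*}
so in particular $\abs{\Rm}^2 \leq \abs{W}^2 + 2\abs{\Rc}^2$ pointwise. Integrating over $B_r(p_i)$ reduces the problem to uniformly bounding $\int_{B_r(p_i)}\abs{\Rc_{g_i}}^2 dV_{g_i}$.

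Next I would remove the Gaussian weight in Lemma \ref{4.sesumlemma}. By Lemma \ref{2.growthlemma}, on $B_r(p_i)$ we have
\begin{equation*}
f_i(x) \leq \tfrac{1}{4}\bigl(r+\sqrt{2n}\bigr)^2 - C_1(g_i),
\end{equation*}
and since $C_1(g_i) = -\mu(g_i)$ is bounded both above and below (the lower bound $\mu(g_i)\geq\ul\mu$ is the hypothesis, and the upper bound $\mu(g_i)\leq\ol\mu(\ul\mu,n)$ was noted after Lemma \ref{2.noncollapslemma}), $f_i$ is bounded above on $B_r(p_i)$ by a constant $A(r,\ul\mu)<\infty$ independent of $i$. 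Hence $e^{-f_i}\geq e^{-A(r,\ul\mu)}$ on $B_r(p_i)$, and applying Lemma \ref{4.sesumlemma} with $\lambda = 1$ gives
\begin{equation*}
\int_{B_r(p_i)}\abs{\Rc_{g_i}}^2 dV_{g_i} \leq e^{A(r,\ul\mu)}\int_{M_i}\abs{\Rc_{g_i}}^2 e^{-f_i}dV_{g_i} \leq e^{A(r,\ul\mu)}C(1,\ul\mu,n).
\end{equation*}

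Combining the pointwise inequality with the Weyl hypothesis and the Ricci bound just obtained yields
\begin{equation*}
\int_{B_r(p_i)}\abs{\Rm_{g_i}}^2 dV_{g_i} \leq C(r) + 2\,e^{A(r,\ul\mu)}C(1,\ul\mu,n) =: E(r) < \infty,
\end{equation*}
uniformly in $i$, which is precisely (\ref{1.Rmbound}). There is no real obstacle here: both ingredients are already in hand, and the only mild technicality is checking that the implicit constant $C_1(g_i)$ in the growth estimate is controlled uniformly along the sequence, which is why the two-sided bound on the entropy is used.
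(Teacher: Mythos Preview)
Your proof is correct and is exactly the argument the paper has in mind: the corollary is stated immediately after Lemma~\ref{4.sesumlemma} with the words ``As a consequence of Lemma~\ref{4.sesumlemma}\ldots'', and your decomposition $\abs{\Rm}^2 = \abs{W}^2 + 2\abs{\Rc}^2 - \tfrac{1}{3}R^2 \leq \abs{W}^2 + 2\abs{\Rc}^2$ together with the removal of the weight via the upper bound on $f_i$ on $B_r(p_i)$ is precisely how one unpacks that remark. The only point worth adding is that your careful tracking of the two-sided entropy bound (to control $C_1(g_i)=-\mu(g_i)$ and hence $f_i$ from above on the ball) is indeed the one place where uniformity in $i$ needs a moment's thought, and you handled it correctly.
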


\begin{rem}
As a consistency check, note that in dimension $n=3$, $\Rm$ is
determined by $\Rc$ and thus only a lower bound for the entropy is
needed and the limit is smooth. Of course, the existence of a smooth
limit also follows from Theorem \ref{2.smoothlimit} and the fact
that $\Rm\geq 0$ on gradient shrinkers for $n=3$. All this is not
surprising, since the only $3$-dimensional gradient shrinkers are
the Gaussian soliton, the cylinder, the sphere and quotients thereof
\cite{Cao09}.\\
\end{rem}

In the following, the goal is to get local energy bounds from
4d-Gauss-Bonnet with boundary. For a $4$-manifold $N$ with boundary
$\partial N$, the Chern-Gauss-Bonnet formula says (see e.g.
\cite{Gil84})
\begin{equation}\label{4.GaussBonnet}
\begin{split}
32\pi^2\chi(N)&=\int_{N}\big(\abs{\Rm}^2-4\abs{\Rc}^2+R^2\big)dV\\
&\quad +16\int_{\partial N}k_1k_2k_3\,dA +8\int_{\partial
N}\big(k_1K_{23}+k_2K_{13}+k_{3}K_{12}\big)dA,
\end{split}
\end{equation}
where the $k_i=\ff (e_i,e_i)$ are the principal curvatures of
$\partial N$ (here $e_1,e_2,e_3$ is an orthonormal basis of
$T\partial N$ diagonalizing the second fundamental form) and the
$K_{ij}=\Rm(e_i,e_j,e_i,e_j)$ are sectional curvatures of $N$.\\

In a first step, we prove Theorem \ref{1.mainthm2} under an extra
assumption which ensures in particular that the cubic boundary term
has the good sign.

\begin{prop}[Convexity implies Riemann energy condition]\label{4.propHess}
Every sequence of $4$-dimensional gradient shrinkers $(M_i,g_i,f_i)$
(with normalization and basepoint as usual) with entropy bounded
below, $\mu(g_i)\geq \ul\mu$, Euler characteristic bounded above,
$\chi(M_i)\leq\ol\chi$, and convex potential at large distances,
\begin{equation}\label{4.convexpot}
\Hess_{g_i}f_i(x)\geq 0\qquad \text{if } d(x,p_i)\geq r_0,
\end{equation}
satisfies the energy condition (\ref{1.Rmbound}).
\end{prop}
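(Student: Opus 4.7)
The plan is to apply the four-dimensional Chern--Gauss--Bonnet formula \eqref{4.GaussBonnet} on the sublevel set $N_s := \{f_i \leq s\}$ for a suitably chosen $s = s(r)$, and to exploit the convexity hypothesis \eqref{4.convexpot} to control the sign of the cubic boundary term. First I would use the quadratic growth of $f_i$ from Lemma \ref{2.growthlemma}, together with the uniform bounds $C_1(g_i) = -\mu(g_i) \in [-\ol\mu,-\ul\mu]$, to pick $s$ large enough, uniformly in $i$, so that $B_r(p_i) \subset N_s$ and $d(\cdot,p_i) \geq r_0$ on $\partial N_s$. Sard's theorem then lets me perturb $s$ to a regular value, so $\partial N_s$ is a smooth closed $3$-manifold with outward normal $\nu = \D f_i/|\D f_i|$ and tangential second fundamental form $\ff = \Hess f_i/|\D f_i|$. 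The hypothesis \eqref{4.convexpot} then forces $\ff \geq 0$, so the principal curvatures satisfy $k_1, k_2, k_3 \geq 0$.

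For the topology, the key observation is that at any critical point of $f_i$ lying in $\{d(\cdot,p_i) \geq r_0\}$ one has $R = \tfrac{n}{2} - \Delta f_i \leq \tfrac{n}{2}$ (since $\Hess f_i \geq 0$ there), and then $|\D f_i|^2 = f_i + C_1 - R$ forces $f_i \leq \tfrac{n}{2} + \ol\mu$; by the quadratic growth of $f_i$ all critical points therefore lie in a compact set whose radius depends only on $n, \ul\mu, r_0$. Enlarging $s$ if necessary, the gradient flow of $f_i$ provides a deformation retract of $M_i$ onto $N_s$, giving $\chi(N_s) = \chi(M_i) \leq \ol\chi$. Applying \eqref{4.GaussBonnet} and rearranging yields
\begin{equation*}
\int_{N_s}|\Rm|^2 dV = 32\pi^2 \chi(N_s) + \int_{N_s}\!\!\big(4|\Rc|^2 - R^2\big) dV - 16\!\int_{\partial N_s}\! k_1 k_2 k_3\, dA - 8\!\int_{\partial N_s}\!\sum_{\mathrm{cyc}}\! k_i K_{jk}\, dA.
\end{equation*}
Three of the four right-hand-side terms are easy to control: the topological term is at most $32\pi^2\ol\chi$; the $-R^2$ term is $\leq 0$; the cubic boundary term is $\geq 0$ by convexity, hence contributes non-positively after the minus sign; and by Lemma \ref{4.sesumlemma} with $\lambda = 1$,
\begin{equation*}
\int_{N_s} |\Rc|^2 dV \leq e^{s(r)} \int_{M_i} |\Rc|^2 e^{-f_i} dV \leq e^{s(r)}\, C(1) =: \tfrac{1}{4}C_1(r).
\end{equation*}

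The main obstacle is the linear boundary term, which has no a priori sign. To handle it I would estimate $\bigl|\sum_{\mathrm{cyc}} k_i K_{jk}\bigr| \leq |\ff|\,|\Rm|$ and then use the coarea formula on the slab $A := \{s(r)\leq f_i \leq s(r)+1\}$ to choose a regular value $s$ in that slab for which
\begin{equation*}
\int_{\partial N_s}|\ff|\,|\Rm|\, dA \leq \int_A |\ff|\,|\Rm|\,|\D f_i|\, dV \leq \int_A |\Hess f_i|\,|\Rm|\, dV,
\end{equation*}
using $|\ff|\,|\D f_i| \leq |\Hess f_i|$. The soliton equation gives $|\Hess f_i|^2 = n/4 - R + |\Rc|^2 \leq n/4 + |\Rc|^2$, so Cauchy--Schwarz combined with Lemma \ref{4.sesumlemma} and the Euclidean volume growth of Lemma \ref{2.ballslemma} yields
\begin{equation*}
\int_A |\Hess f_i|\,|\Rm|\, dV \leq C_2(r)\bigg(\int_{N_s}|\Rm|^2 dV\bigg)^{\!1/2}.
\end{equation*}
Putting everything together, the rearranged Gauss--Bonnet identity becomes a self-improving inequality $I \leq C_1(r) + C_2(r)\sqrt{I}$ for $I := \int_{N_s}|\Rm|^2 dV$, which forces $I \leq 2C_1(r) + C_2(r)^2$; restricting to $B_r(p_i) \subset N_s$ then gives \eqref{1.Rmbound}. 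The whole device hinges on the coarea averaging trick, which trades the unsigned boundary integral for a thin-slab volume integral, and which Lemma \ref{4.sesumlemma} makes small enough to absorb. In the general Theorem \ref{1.mainthm2} the cubic boundary term is no longer sign-definite either, and that is presumably where the delicate integration-by-parts argument of Lemma \ref{4.RcRmlemma} becomes necessary.
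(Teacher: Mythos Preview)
Your overall strategy is right, and most of the ingredients (convexity gives the sign of the cubic boundary term, $\chi(N_s)=\chi(M_i)$ via the gradient flow, the weighted Ricci bound from Lemma~\ref{4.sesumlemma}) match the paper's. But the absorption step does not close. You choose $s\in[s(r),s(r)+1]$ by a mean-value argument and apply Gauss--Bonnet on $N_s$; then you bound the linear boundary term by $\int_A|\Hess f_i|\,|\Rm|\,dV$ over the slab $A=\{s(r)\le f_i\le s(r)+1\}$, and claim via Cauchy--Schwarz that this is at most $C_2(r)\sqrt{I}$ with $I=\int_{N_s}|\Rm|^2$. The problem is that $A\not\subset N_s$ unless $s=s(r)+1$: the portion $\{s<f_i\le s(r)+1\}$ of the slab lies \emph{outside} $N_s$, so $\int_A|\Rm|^2\le I$ is not justified. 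What you actually obtain is only
\[
\int_{N_s}|\Rm|^2\;\le\;C_1(r)+C_2(r)\Big(\int_{N_{s(r)+1}}|\Rm|^2\Big)^{\!1/2},
\]
which is not self-improving. Iterating over successive slabs $[s(r)+k,s(r)+k+1]$ does not rescue the argument either, since you have no a priori finiteness of $\int_{N_{s(r)+k}}|\Rm|^2$ for any $k$ to anchor the recursion.

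The paper resolves this by integrating Gauss--Bonnet over \emph{all} levels rather than selecting one. With $F=e^{-f}$, $M_u=\{F\ge u\}$, and the cutoff $\vartheta=\min\{u_0,F\}$, the layer-cake identity gives
\[
\int_M|\Rm|^2\vartheta\,dV=\int_0^{u_0}\!\int_{M_u}|\Rm|^2\,dV\,du.
\]
After applying Gauss--Bonnet on each $M_u$ and integrating in $u$, the coarea formula converts the accumulated linear boundary term into $\int_{M\setminus M_{u_0}}|\Hess f|\,|\Rm|\,\vartheta\,dV$ (the $|\nabla f|$-factors cancel exactly as in your computation). Young's inequality then produces $\tfrac{1}{48}\int_M|\Rm|^2\vartheta\,dV+12\int_M|\Hess f|^2e^{-f}\,dV$, and the crucial point is that the \emph{same} weight $\vartheta$ appears on both sides, so the first term absorbs cleanly into the left-hand side. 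Your single-level-plus-slab device loses precisely this matching of weights; replacing the mean-value selection by the layer-cake integration fixes the gap.
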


\begin{proof}
Let us introduce some notation first. We suppress the index $i$ and
write $F(x)=e^{-f(x)}$ and define the level and superlevel sets
\begin{equation}
\Sigma_u=\{x\in M \mid F(x)=u\}, \quad M_u=\{x\in M \mid F(x) \geq
u\}.
\end{equation}
Note that $M_0=M$ and $M_{u_2}\subset M_{u_1}$ if $u_2\geq
u_1$.\\

By the traced soliton equation (\ref{2.tracedsoleqn}) and assumption
(\ref{4.convexpot}), we have $R\leq\tfrac{n}{2}$ at large distances.
Using this, the auxiliary equation (\ref{2.auxiliaryeqn}), Lemma
\ref{2.growthlemma}, and the bounds $\ul{\mu}\leq -C_1(g)\leq
\ol{\mu}$, we see that $f$ does not have critical points at large
distances. In fact, there is a constant $u_0=u_0(r_0,\ul\mu)>0$ such
that $\abs{\D f}\geq 1$ and $\D^2f\geq 0$ if $F(x)\leq u_0$.
Moreover, for $0<u\leq u_0$ the $\Sigma_u$ are smooth compact
hypersurfaces, they are all diffeomorphic and we have $\partial
M_u=\Sigma_u$ and
$\chi(M_u)=\chi(M)$.\\

Define a cutoff function $\vartheta(x):=\min\{u_0,F(x)\}$, then
\begin{equation}\label{4.fubinitrick}
\int_M\abs{\Rm}^2 \vartheta\, dV=\int_M\abs{\Rm}^2\int_0^{u_0}
1_{\{u\leq F\}}\, du\, dV=\int_0^{u_0} \int_{M_u}\abs{\Rm}^2 dV du.
\end{equation}
Now, we can apply (\ref{4.GaussBonnet}) for $N=M_u$. Note that
$\chi(M_u)\leq \ol\chi$, and that the scalar curvature term and the
cubic boundary term are nonnegative. Indeed,
\begin{equation}
\ff=-\D^2_{\perp} F/\abs{\D F}=\tfrac{1}{\abs{\D f}}\left(\D^2 f-\D
f \otimes \D f\right)_{\perp}=\tfrac{1}{\abs{\D f}}\D^2_{\perp}f\geq
0,
\end{equation}
where $\perp$ denotes the restriction of the Hessian to $T\Sigma_u$.
Thus, we obtain
\begin{equation}
\begin{split}
\int_{M_u}\abs{\Rm}^2dV &\leq 32\pi^2\ol\chi
+4\int_{M_u}\abs{\Rc}^2dV\\
&\quad -8\int_{\Sigma_u}(k_1K_{23}+k_2K_{13}+k_{3}K_{12})dA
\end{split}
\end{equation}
and undoing (\ref{4.fubinitrick}), using $\vartheta\leq e^{-f}$,
$\abs{k_i}\leq \abs{\ff}$ and $\abs{K_{ij}}\leq\abs{\Rm}$, this
implies
\begin{align}
\int_M\abs{\Rm}^2\vartheta\, dV\leq 32\pi^2\ol\chi
u_0+4\int_M\abs{\Rc}^2e^{-f}dV
+24\int_0^{u_0}\int_{\Sigma_u}\abs{\ff}\abs{\Rm}dAdu.
\end{align}
The Ricci term can be estimated as in (\ref{4.sesumtrick}). For the
last term we use the coarea formula (observe the cancelation):
\begin{equation}
\begin{split}
\int_0^{u_0}\int_{\Sigma_u}\abs{\ff}\abs{\Rm}dA du
&\leq\int_{M\setminus M_{u_0}}\frac{\abs{\D^2f}}{\abs{\D f}}
\abs{\Rm}\abs{\D f}\vartheta\, dV\\
&\leq\tfrac{1}{48}\int_M\abs{\Rm}^2 \vartheta\, dV+12\int_M
\abs{\D^2 f}^2e^{-f}dV.
\end{split}
\end{equation}
The first term can be absorbed, the second one can be dealt with as
in~(\ref{4.sesumtrick}),
\begin{equation}
\begin{split}
\int_M\abs{\D^2 f}^2e^{-f}dV &=\int_M\scal{\D^2
f,\tfrac{1}{2}g-\Rc}e^{-f}dV\\
&=\tfrac{1}{2}\int_M\Lap f e^{-f}dV+\int_M\langle\D f,\divop(e^{-f}\Rc)\rangle dV\\
&=\tfrac{1}{2}
\int_M\left(\tfrac{n}{2}-R\right) e^{-f}dV,
\end{split}
\end{equation}
where we used the traced soliton equation and $\divop(e^{-f}\Rc)=0$ in the last step.
Putting everything together, we obtain a uniform bound for $\int_M
\abs{\Rm}^2 \vartheta\, dV$, and (\ref{1.Rmbound}) follows.
\end{proof}

Let us now replace the (unnatural) assumption (\ref{4.convexpot}) by
the weaker assumption (\ref{1.noncritf}). Let
$u_0=u_0(r_0,\ul\mu)>0$ such that $\abs{\D f}\geq c$ if $F(x)\leq
u_0$ and $\vartheta(x):=\min\{u_0,F(x)\}$ a cutoff function as
before. The proof is essentially identical, except that in addition
we have to estimate (the negative part of) the cubic boundary term
in the Gauss-Bonnet formula. By the coarea formula
\begin{equation}\label{4.cubicest}
\Abs{\int_0^{u_0}\int_{\Sigma_u}\det\ff dAdu}\leq \int_{M\setminus
M_{u_0}}\frac{\abs{\D^2_\perp f}^3}{\abs{\D f}^2}e^{-f}dV\leq\frac{1}{c^2}
\int_M\abs{\Rc-\tfrac{1}{2}g}^3e^{-f}dV.
\end{equation}
Note that the only difficult term is $\int_M\abs{\Rc}^3e^{-f}dV$,
since all other terms can be uniformly bounded using Lemma
\ref{4.sesumlemma}. Fortunately, we can bound this weighted
$L^3$-norm of Ricci by uniformly controlled terms and a weighted
$L^2$ Riemann term that can be absorbed in the Gauss-Bonnet
argument. Exploiting the algebraic structure of the equations for
gradient shrinkers and the full strength of Lemma
\ref{4.sesumlemma}, we obtain the following key estimate.

\begin{lemma}[Weighted $L^3$ estimate for Ricci]\label{4.RcRmlemma}
For $\eps>0$ and $\ul{\mu}>-\infty$ there exist constants
$C(\eps)=C(\eps,\ul{\mu},n)<\infty$ such that for every gradient
shrinker $(M^n,g,f)$ with our usual normalization and
$\mu(g)\geq\ul{\mu}$ we have the estimate
\begin{equation}\label{4.RcRmestimate}
\int_M \abs{\Rc}^3e^{-f}dV \leq \eps \int_M \abs{Rm}^2e^{-f}dV
+C(\eps).
\end{equation}
\end{lemma}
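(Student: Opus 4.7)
The strategy is to trade factors of $\Rc$ for Hessians of $f$ via the soliton equation, integrate by parts using the divergence-free identity $\divop(e^{-f}\Rc)=0$, and then invoke a Bochner-type formula on $\abs{\Rc}^2$ to bring the Riemann tensor into the picture. The starting move is to split $\abs{\Rc}^2=R_{ij}R_{ij}$, replace one factor via $R_{ij}=\tfrac{1}{2}g_{ij}-\D_i\D_j f$, and obtain
\begin{equation*}
\int_M\abs{\Rc}^3 e^{-f}dV \;=\; \tfrac{1}{2}\int_M R\abs{\Rc}e^{-f}dV \;-\; \int_M \abs{\Rc}\,R_{ij}\D_i\D_j f\,e^{-f}dV.
\end{equation*}
The first term is $O(1)$ by Cauchy-Schwarz, Lemma~\ref{4.sesumlemma} and the quadratic growth of $R$ from Section~\ref{SectionGH}. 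For the second I would integrate $\D_i$ by parts; the two ``boundary'' contributions $(\D_j R_{ij})\D_i f$ and $R_{ij}\D_i f\,\D_j f$ that appear cancel exactly by symmetry of $\Rc$ together with $\divop(e^{-f}\Rc)=0$, leaving $-\int\abs{\Rc}R_{ij}\D_i\D_j f\,e^{-f}dV = \int \D_j\abs{\Rc}\cdot R_{ij}\D_i f\,e^{-f}dV$. Kato's inequality and Young then yield $\int\abs{\Rc}^3e^{-f}\le \tfrac{\delta}{2}\int\abs{\D\Rc}^2 e^{-f}+C(\delta)$, the error $\tfrac{1}{2\delta}\int\abs{\Rc}^2\abs{\D f}^2 e^{-f}$ being absorbed via $\abs{\D f}^2\le f+C_1$ and Lemma~\ref{4.sesumlemma} applied with a slightly stronger weight $e^{-\lambda f}$, $\lambda<1$.

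Next I would bring $\Rm$ into the picture via Bochner. The elliptic equation for Ricci on a shrinker (derived analogously to (\ref{2.ellder}) for $\Rm$) reads $\Lap R_{ij} = \langle \D f,\D R_{ij}\rangle + R_{ij} - 2 R_{ikjl}R_{kl}$, and contraction with $R_{ij}$ gives
\begin{equation*}
\tfrac{1}{2}\Lap_f \abs{\Rc}^2 \;=\; \abs{\D\Rc}^2 + \abs{\Rc}^2 - 2 R_{ij}R_{ikjl}R_{kl},
\end{equation*}
with $\Lap_f := \Lap - \langle \D f,\D\cdot\rangle$. Since $\Lap_f$ is formally self-adjoint with respect to $e^{-f}dV$ and annihilates constants, integrating the identity against $e^{-f}dV$ (justified by a cutoff argument as in the proof of (\ref{2.piform})) kills the left-hand side, so
\begin{equation*}
\int_M \abs{\D\Rc}^2 e^{-f}dV \;=\; 2\int_M R_{ij}R_{ikjl}R_{kl}\,e^{-f}dV - \int_M \abs{\Rc}^2 e^{-f}dV.
\end{equation*}
Since $\abs{R_{ij}R_{ikjl}R_{kl}}\le C_n\abs{\Rm}\abs{\Rc}^2$ pointwise, the running estimate becomes $\int\abs{\Rc}^3 e^{-f}dV \le \delta C_n\int\abs{\Rm}\abs{\Rc}^2 e^{-f}dV + C(\delta)$.

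The main obstacle is to bound $\int\abs{\Rm}\abs{\Rc}^2 e^{-f}dV$ by $\eps\int\abs{\Rm}^2 e^{-f}+C_\eps$. A naive Young split introduces $\int\abs{\Rc}^4 e^{-f}dV$, which is \emph{not} controllable by Lemma~\ref{4.sesumlemma} alone; worse, repeating the integration-by-parts scheme above on $\int\abs{\Rc}^4 e^{-f}dV$ collapses to a tautology, because the contracted identity (\ref{2.bianchi}), $R_{ij}\D_j f = \tfrac{1}{2}\D_i R$, lets one integrate the resulting $\langle\D\abs{\Rc}^2,\D R\rangle$ back into itself. To break this deadlock I would exploit the \emph{untraced} soliton identity obtained by commuting two covariant derivatives of $f$,
\begin{equation*}
R_{\alpha\beta\gamma\delta}\,\D^\delta f \;=\; \D_\alpha R_{\beta\gamma} - \D_\beta R_{\alpha\gamma},
\end{equation*}
which preserves the full Riemann tensor and is immune to the tracing collapse. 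The plan is then to replace one factor $\Rc = \tfrac{1}{2}g-\Hess f$ inside $\abs{\Rm}\abs{\Rc}^2$, integrate the $\Hess f$ by parts against $e^{-f}\Rm\cdot\Rc$ (using the contracted second Bianchi identity for $\divop\Rm$), and trade every $\Rm\cdot\D f$ contraction that appears for $\D\Rc$ via the identity above. The resulting terms are $\int\abs{\D\Rc}^2 e^{-f}dV$, $\int R\abs{\Rc}^2 e^{-f}dV$, and a small multiple $\eta\int\abs{\Rm}^2 e^{-f}dV$ of the target, all of which are either already under control or absorbable. Choosing $\delta$ and $\eta$ small enough and performing the absorption should close the estimate and give~(\ref{4.RcRmestimate}).
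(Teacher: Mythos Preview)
Your first two moves are fine and in fact match the paper closely: replace one $\Rc$ by $\tfrac12 g-\Hess f$, integrate by parts using $\divop(e^{-f}\Rc)=0$, apply Kato and Young, and then invoke the drift--Bochner identity for $\Rc$ to bring in $R_{ij}R_{ikjl}R_{kl}$. The real problem is in your final step. With the weight $e^{-f}$ that you carry throughout, the scheme does \emph{not} close. If you follow your own recipe on the tensorial term (not on $\abs{\Rm}\abs{\Rc}^2$, where the absolute value blocks any integration by parts), you get
\[
\int_M R_{ij}R_{ikjl}R_{kl}\,e^{-f}dV
=\tfrac12\int_M\abs{\Rc}^2e^{-f}dV
+\int_M(\D_kR_{ij})R_{ikjl}\D_l f\,e^{-f}dV,
\]
and your untraced identity $R_{ikjl}\D_lf=\D_kR_{ij}-\D_iR_{kj}$ turns the last integral into exactly $\tfrac12\int_M\abs{R_{ikjl}\D_l f}^2e^{-f}dV$. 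Combined with your Bochner identity this says $\int_M\abs{\D\Rc}^2e^{-f}dV=\int_M\abs{R_{ikjl}\D_l f}^2e^{-f}dV$, an \emph{equality} giving no leverage. Estimating the right-hand side pointwise yields $\int_M\abs{\Rm}^2\abs{\D f}^2e^{-f}dV$, and since $\abs{\D f}^2$ grows quadratically this cannot be bounded by any multiple of $\int_M\abs{\Rm}^2e^{-f}dV$. Your claimed ``small multiple $\eta\int\abs{\Rm}^2e^{-f}$'' never materializes; the terms you list are exactly the ones you are trying to bound, so the argument is circular.

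The missing idea is a weight shift. In the very first Young split, instead of writing $\abs{\D\abs{\Rc}}\cdot\abs{\Rc}\abs{\D f}\,e^{-f}\le\tfrac{\delta}{2}\abs{\D\Rc}^2e^{-f}+\tfrac{1}{2\delta}\abs{\Rc}^2\abs{\D f}^2e^{-f}$, split $e^{-f}=e^{-3f/4}\cdot e^{-f/4}$ so that the gradient term carries the stronger weight $e^{-3f/2}$ and the error carries $e^{-f/2}$ (still controlled by Lemma~\ref{4.sesumlemma}). Then run the whole $\abs{\D\Rc}^2$ estimate with weight $e^{-3f/2}$: the drift operator becomes $\Lap-\tfrac32\scal{\D f,\D\,\cdot\,}$, so the Bochner step produces an extra $-\tfrac12\scal{\D f,\D R_{ij}}R_{ij}$ which is harmless (Young and absorb), while the final $\abs{R_{ikjl}\D_l f}^2$ now comes multiplied by $e^{-3f/2}$, and $\abs{\D f}^2e^{-3f/2}\le Ce^{-f}$ converts it into the desired $C\int_M\abs{\Rm}^2e^{-f}dV$. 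This weight asymmetry is precisely the device that breaks the tautology you identified.
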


\begin{proof}
Analogous to (\ref{2.bianchi}), we have
\begin{equation}\label{4.bianchi}
\D_kR_{ij}-\D_iR_{kj}=-\D_k\D_i\D_jf +\D_i\D_k\D_jf =
R_{ikj\ell}\D_\ell f
\end{equation}
and as a direct consequence $\divop(e^{-f}\Rm)=0$. Moreover, analogous to (\ref{2.ellipteqR}), the shrinker
version of the evolution equation for the Ricci tensor is
\begin{equation}\label{4.ellipticRc}
R_{ij}+\scal{\D f,\D R_{ij}} = \Lap R_{ij} + 2R_{ikj\ell}R_{k\ell}.
\end{equation}
Now, for a cutoff function $\eta_r$ as in the proof of Lemma
\ref{4.sesumlemma}, we compute
\begin{align*}
\int_M \eta_r\abs{\Rc}^3e^{-f}dV &=\int_M \eta_r
\abs{\Rc}\scal{\tfrac{1}{2}g-\D^2f,\Rc}e^{-f}dV\\
&=\int_M \big(\tfrac{1}{2}\eta_r \abs{\Rc}R+
\abs{\Rc}\Rc(\D f,\D\eta_r) +\eta_r\Rc(\D f,\D\abs{\Rc})\big)e^{-f}dV\\
&\leq \int_M \big(\tfrac{1}{2}\eta_r\abs{\Rc}R+
\abs{\D\eta_r}\abs{\Rc}^2\abs{\D f}\big)e^{-f}dV\\
&\quad + \delta \int_M \eta_r\abs{\D\Rc}^2e^{-\frac{3}{2}f}dV
+\tfrac{1}{4\delta}\int_M \eta_r \abs{\Rc}^2\abs{\D
f}^2e^{-\frac{1}{2}f}dV\\
&\leq \delta \int_M \eta_r\abs{\D\Rc}^2e^{-\frac{3}{2}f}dV
+C(\delta),
\end{align*}
for $\delta>0$ to be chosen later. Here, we used Young's
inequality, Kato's inequality, the growth estimates from Section \ref{SectionGH}
and Lemma \ref{4.sesumlemma} (note that $\abs{\D f}^2e^{-f/2}\leq
Ce^{-f/4}$ etc.). Note that the constant $C(\delta)$ does not depend on the
scaling factor $r$ of the cutoff function $\eta_r$, so by sending
$r\to\infty$, we obtain
\begin{equation}\label{4.Rc3DRc}
\int_M \abs{\Rc}^3e^{-f}dV \leq \delta \int_M
\abs{\D\Rc}^2e^{-\frac{3}{2}f}dV +C(\delta).
\end{equation}
Next, we estimate the weighted $L^2$-norm of $\D\Rc$ with a partial
integration, equation (\ref{4.ellipticRc}), and Young's inequality,
\begin{align*}
\int_M \eta_r^2\abs{\D\Rc}^2e^{-\frac{3}{2}f}dV &= - \int_M
\eta_r^2\big(\Lap R_{ij}-\tfrac{3}{2}\scal{\D f,\D
R_{ij}}\big)R_{ij}e^{-\frac{3}{2}f}dV\\
&\quad - \int_M 2\eta_r\scal{\D\eta_r,\D
R_{ij}}R_{ij}e^{-\frac{3}{2}f}dV\\
&\leq -\int_M \eta_r^2\big(R_{ij}-2R_{ikj\ell}R_{k\ell})R_{ij}
e^{-\frac{3}{2}f}dV\\
&\quad + \int_M \big(\tfrac{1}{2}\eta_r^2\abs{\D\Rc}^2+\tfrac{1}{4}\eta_r^2 \abs{\Rc}^2\abs{\D f}^2 +
4\abs{\D\eta_r}^2\abs{\Rc}^2
\big)e^{-\frac{3}{2}f}dV.
\end{align*}
By absorption, the growth estimates from Section \ref{SectionGH},
Lemma \ref{4.sesumlemma} and the soliton equation, we obtain
\begin{align*}
\int_M \eta_r^2\abs{\D\Rc}^2e^{-\frac{3}{2}f}dV &\leq C-2\int_M
\eta_r^2\big(R_{ij}-2R_{ikj\ell}R_{k\ell})R_{ij} e^{-\frac{3}{2}f}dV\\
& = C-4\int_M \eta_r^2 R_{ikj\ell}R_{ij}\D_k\D_\ell f
e^{-\frac{3}{2}f}dV.
\end{align*}
Finally, with another partial integration, $\divop(e^{-f}\Rm)=0$, and
with
\begin{equation}
2R_{ikj\ell}\D_kR_{ij}\D_\ell f = R_{ikj\ell}\D_\ell
f(\D_kR_{ij}-\D_iR_{kj})= \abs{R_{ikj\ell}\D_\ell f}^2,
\end{equation}
which follows from (\ref{4.bianchi}) and which is the identity that
makes the proof work, we get
\begin{align}\label{4.DRcRm}
\int_M \eta_r^2\abs{\D\Rc}^2e^{-\frac{3}{2}f}dV &\leq C+2\int_M
\eta_r^2 \abs{R_{ikj\ell}\D_\ell f}^2 e^{-\frac{3}{2}f}dV\nonumber\\
&\quad + \int_M \big(8\eta_r\D_k\eta_r R_{ikj\ell}R_{ij}\D_\ell f
-2\eta_r^2 R_{ikj\ell}R_{ij}\D_k f\D_\ell f\big)e^{-\frac{3}{2}f}dV\nonumber\\
&\leq C + C\int_M \eta_r^2 \abs{\Rm}^2e^{-f}dV.
\end{align}
In the last step, we used again Young's inequality, the growth
estimates from Section \ref{SectionGH}, Lemma \ref{4.sesumlemma} and
$\abs{\D f}^2e^{-\frac{3}{2}f}\leq Ce^{-f}$. The claim now follows
by sending $r\to\infty$, plugging into (\ref{4.Rc3DRc}), and
choosing $\delta>0$ such that $C\delta \leq \eps$ for the constant
$C$ in (\ref{4.DRcRm}).
\end{proof}

Now Theorem \ref{1.mainthm2} is an immediate consequence.

\begin{proof}[Proof of Theorem \ref{1.mainthm2}]
Picking $\eps=\eps(c,r_0,\ul{\mu})>0$ so small that $\eps e^{-f}\leq\vartheta c^2/100$ and applying Lemma \ref{4.RcRmlemma}, the theorem
follows as explained in the discussion after Proposition
\ref{4.propHess}.
\end{proof}

\appendix

\section{Proofs of the lemmas from Section 2}\label{applemmas}

\begin{proof}[Proof of Lemma \ref{2.growthlemma}]
From (\ref{2.auxiliaryeqn}) and (\ref{2.scalarpos}), we obtain
\begin{equation}
0\leq \abs{\D f}^2\leq f+C_1,\label{2.dfbound}
\end{equation}
i.e.\ $\abs{\D\sqrt{f+C_1}}\leq \frac{1}{2}$ whenever $f+C_1>0$.
Hence $\sqrt{f+C_1}$ is $\frac{1}{2}$-Lipschitz and thus
\begin{equation}
\sqrt{f(x)+C_1}\leq \tfrac{1}{2}\big(d(x,y)
+2\sqrt{f(y)+C_1}\big),\label{2.lipschitz}
\end{equation}
for all $x,y\in M$, which will give the upper bound in
(\ref{2.quadgrowth}). The idea to prove the lower bound is the same as in the theorem of Myers (which would give a diameter bound if the shrinker potential was constant). Consider a minimizing geodesic
$\gamma(s)$, $0\leq s\leq s_0:=d(x,y)$, joining $x=\gamma(0)$ with
$y=\gamma(s_0)$. Assume $s_0>2$ and let
\begin{equation*}
\phi(s)=\begin{cases} s,&s\in[0,1]\\
1,&s\in[1,s_0-1]\\
s_0-s,&s\in[s_0-1,s_0].\end{cases}
\end{equation*}
By the second variation formula for the energy of $\gamma$,
\begin{equation*}
\int_0^{s_0}\phi^2\Rc(\gamma',\gamma')ds \leq
(n-1)\int_0^{s_0}\phi'^2 ds=2n-2,
\end{equation*}
where $\gamma'(s)=\ds \gamma(s)$. Note that by the soliton equation (\ref{2.soleqn})
\begin{equation*}
\Rc(\gamma',\gamma')=\tfrac{1}{2}-\D_{\gamma'}\D_{\gamma'}f,
\end{equation*}
which implies
\begin{equation}\label{2.distancebound}
\begin{split}
\tfrac{d(x,y)}{2}+\tfrac{4}{3}-2n &\leq
\int_0^{s_0}\phi^2\D_{\gamma'}\D_{\gamma'}f ds\\
&=-2\int_{0}^{1}\phi \D_{\gamma'}fds+2\int_{s_0-1}^{s_0}\phi
\D_{\gamma'}f ds\\
&\leq\sup_{s\in[0,1]}\abs{\D_{\gamma'}f}
+\sup_{s\in[s_0-1,s_0]}\abs{\D_{\gamma'}f}\\
&\leq\sqrt{f(x)+C_1}+\tfrac{1}{2}+\sqrt{f(y)+C_1}+\tfrac{1}{2},
\end{split}
\end{equation}
where we used (\ref{2.dfbound}) and the fact that $\sqrt{f+C_1}$ is
$\frac{1}{2}$-Lipschitz in the last step. By
(\ref{2.distancebound}), every minimizing sequence is bounded and
$f$ attains its infimum at a point $p$. Since $\Lap f(p)\geq 0$,
(\ref{2.tracedsoleqn}) and (\ref{2.scalarpos}) imply
\begin{equation}
0\leq R(p)\leq\tfrac{n}{2}.
\end{equation}
Using this and $\D f(p)=0$, equation (\ref{2.auxiliaryeqn}) implies
\begin{equation}
0 \leq f(p)+C_1\leq \tfrac{n}{2}.\label{2.fcest}
\end{equation}
Now the quadratic growth estimate (\ref{2.quadgrowth}) follows from
(\ref{2.lipschitz}), (\ref{2.distancebound}) and (\ref{2.fcest}) by
setting $y=p$. Finally, if $d(x,p)>5n+\sqrt{2n}$, then
\begin{equation*}
f(x)+C_1\geq \tfrac{1}{4}(d(x,p)-5n)^2>\tfrac{n}{2}\geq f(p)+C_1,
\end{equation*}
which implies the last statement of the lemma.
\end{proof}

\begin{proof}[Proof of Lemma \ref{2.ballslemma}]
Let $\varrho(x)=2\sqrt{f(x)+C_1}$. This grows linearly, since
(\ref{2.quadgrowth}) implies
\begin{equation}\label{2.growthrho}
d(x,p)-5n \leq \varrho(x) \leq d(x,p)+5n.
\end{equation}
Define $\varrho$-discs by $D(r):=\{x\in M\mid\varrho(x)<r\}$, let
$V(r)$ be their volume and
\begin{equation}
S(r):=\int_{D(r)}R\,dV
\end{equation}
their total scalar curvature. Since $\int_{D(r)}\Lap f\,
dV=\int_{\partial D(r)}\abs{\D f}\,dA\geq 0$, integrating
(\ref{2.tracedsoleqn}) gives
\begin{equation}
S(r)\leq \frac{n}{2}V(r),
\end{equation}
i.e.\ the average scalar curvature is bounded by $\tfrac{n}{2}$.
Moreover, (\ref{2.tracedsoleqn}) and (\ref{2.auxiliaryeqn}) imply
\begin{equation*}
(r^{-n}V(r))'=4r^{-(n+2)}S'(r)-2r^{-(n+1)}S(r),
\end{equation*}
which yields the following estimate by integration
\begin{equation}
V(r)\leq\frac{V(r_0)}{r_0^n}r^n+\frac{4}{r^2}S(r)
\end{equation}
for $r\geq r_0:=\sqrt{2n+4}$, see \cite[Eq.~(3)]{Mun09} or
\cite[Eq.~(3.6)]{CZ09} for details. Hence, if $r\geq \sqrt{4n}$, we
get by absorption
\begin{equation*}
V(r)\leq \frac{2V(r_0)}{r_0^n}r^n.
\end{equation*}
Thus, for every $r\geq 5n$ we obtain
\begin{equation*}
\Vol B_r(p)\leq V(r+5n)\leq V(2r) \leq
\tfrac{2^{n+1}}{r_0^n}V(r_0)r^n \leq \tfrac{2^{n+1}}{r_0^n}\Vol
B_{r_0+5n}(p)r^n.
\end{equation*}
This proves the lemma up to the statement that $C_2$ depends only on
the dimension and that (\ref{2.ballseqn}) also holds for balls with $r<5n$. To get this, note that $\abs{\D
f(x)}\leq\tfrac{1}{2}r_0+5n=:a$ for $d(x,p)\leq r_0+5n=:R_0$.
Now, using the fact that the Bakry-Emery Ricci tensor
$\Rc_f=\Rc+\Hess f$ of the manifold with density
$(M,g,e^{-(f+C_1)}dV)$ is nonnegative by the soliton equation, we
obtain, see \cite[Thm.~1.2a]{WW09},
\begin{equation}
\frac{\int_{B_R(p)}e^{-(f+C_1)}dV}{\int_{B_\eps(p)}e^{-(f+C_1)}dV}\leq
e^{aR}\frac{R^n}{\eps^n}.
\end{equation}
for $0<\eps< R\leq R_0$. Since $\abs{f+C_1}\leq a^2$ on $B_{R_0}(p)$, this
implies
\begin{equation}\label{2.BEvolumecomp}
\Vol B_R(p)\leq e^{2a^2+aR}\frac{R^n}{\eps^n}\Vol B_\eps(p)
\end{equation}
and by sending $\eps$ to zero the claim follows.
\end{proof}

\begin{proof}[Proof of Lemma \ref{2.noncollapslemma}]
Suppose towards a contradiction that
there exist a sequence of gradient shrinkers $(M_i,g_i,f_i)$ with
$\mu(g_i)\geq\ul{\mu}$ and balls $B_{\delta_i}(x_i)\subset B_r(p_i)$
with $\delta_i^{-n}\Vol B_{\delta_i}(x_i)\to 0$. We will not
directly use $B_{\delta_i}(x_i)$ but consider the sequence of unit
balls $B_1(x_i)\subset B_{r+1}(p_i)$ instead, which allows to work
with the shrinker entropy as defined above rather than with a
version that explicitly involves a scaling or time parameter $\tau$
as it is necessary for the argument in \cite{KL08}. Set
$a:=\frac{1}{2}(r+1+5n)$, then $\abs{\D f_i}\leq a$,
$\abs{f_i+C_1(g_i)}\leq a^2$ and $R_{g_i}\leq a^2$ on
$B_{r+1}(p_i)$. The volume comparison theorem for the Bakry-Emery
Ricci tensor implies, as in (\ref{2.BEvolumecomp}),
\begin{equation}\label{2.smallvolume}
\Vol B_1(x_i) \leq e^{2a^2+a}\delta_i^{-n}\Vol B_{\delta_i}(x_i)\to
0
\end{equation}
for $i\to\infty$, as well as
\begin{equation}\label{2.smallratio}
\Vol B_1(x_i) \leq 2^n e^{2a^2+a} \Vol B_{1/2}(x_i), \quad
\forall i \in \NN.
\end{equation}
Define the test functions $\tilde{u}_i = c_i^{-1/2}\eta_i$ with
$\eta_i(x)=\eta(d(x,x_i))$ for a cutoff function $\eta$ as in Section \ref{SectionGH} and
with $\int_{M_i} \tilde{u}_i^2 dV = (4\pi)^{n/2}$, i.e.
\begin{equation*}
c_i = (4\pi)^{-n/2}\int_{M_i} \eta_i^2 dV \leq (4\pi)^{-n/2}\Vol
B_1(x_i)\to 0.
\end{equation*}
Let $C$ be an upper bound for $4\eta'^2-\eta^2\log \eta^2$. Using
(\ref{2.smallratio}) and $c_i^{-1}\Vol B_{1/2}(x_i)\leq c_i^{-1} 
\int_{M_i} \eta_i^2 = (4\pi)^{n/2}$, we obtain
\begin{align*}
c_i^{-1}\int_{M_i} \Big(4\abs{\D\eta_i}^2-\eta_i^2\log \eta_i^2\Big)dV
\leq c_i^{-1}\Vol B_1(x_i)C
\leq (4\pi)^{n/2}2^n e^{2a^2+a}C.
\end{align*}
Hence
\begin{align*}
\sW(g_i,\tilde{u}_i) &= (4\pi)^{-n/2}c_i^{-1}\int_{M_i}
\Big(4\abs{\D\eta_i}^2-\eta_i^2\log \eta_i^2\Big)dV\\
&\quad + (4\pi)^{-n/2}\int_{M_i} (R-n +\log c_i)\tilde{u}_i^2 dV\\
&\leq 2^n e^{2a^2+a}C + a^2-n +\log c_i,
\end{align*}
which tends to $-\infty$ as $c_i$ tends to zero, contradicting the
lower entropy bound $\sW(g_i,\tilde{u}_i)\geq \mu(g_i)\geq
\ul{\mu}>-\infty$.
\end{proof}

\makeatletter
\def\@listi{%
  \itemsep=0pt
  \parsep=1pt
  \topsep=1pt}
\makeatother
{\fontsize{10}{11}\selectfont
}
\vspace{10mm}

Robert Haslhofer\\
{\sc ETH Z\"urich, 8092 Z\"urich, Switzerland}\\

Reto M\"uller\\
{\sc Scuola Normale Superiore di Pisa, 56126 Pisa, Italy}
\end{document}